\newtheoremstyle{mythm}{3pt}{3pt}{}{16pt}{\bfseries}{:}{.5em}{}
\theoremstyle{mythm}
\newtheorem{theorem}{Theorem}
\newtheorem{definition}{Definition}
\newtheorem{remark}{Remark}
\newtheorem{corollary}{Corollary}
\newtheorem{lemma}{Lemma}
\newtheorem{construction}{Construction}
\DeclareMathOperator{\pr}{Pr}
\DeclareMathOperator{\dec}{Dec}
\DeclareMathOperator{\tr}{Tr}
\newcommand{\N}{\mathbb{N}}
\newcommand{\Z}{\mathbb{Z}}
\newcommand{\F}{\mathbb{F}}
\newcommand{\floornv}[1]{\left\lfloor #1 \right\rfloor}
\begin{document}

\title{Algebraic Manipulation Detection Codes via Highly Nonlinear Functions
\author{Minfeng Shao and Ying Miao}
\thanks{M. Shao and Y. Miao are with the Graduate School of Systems and Information Engineering,
University of Tsukuba, Tennodai 1-1-1, Tsukuba 305-8573, Japan
(e-mail: minfengshao@gmail.com, miao@sk.tsukuba.ac.jp).
This research is supported by JSPS Grant-in-Aid for Scientific Research (B) under Grant No. 18H01133.
}
 }
\date{}
\maketitle


\begin{abstract}
In this paper, we study the relationship between algebraic manipulation detection (AMD) codes
and highly nonlinear functions. As applications, on one hand, a generic construction for
systematic AMD codes is introduced based on highly nonlinear functions.
Systematic AMD codes with new parameters can be generated from known highly nonlinear functions.
Especially, several infinite classes of optimal systematic AMD codes,
some with asymptotically optimal tag size, can be constructed.
On the other hand, systematic AMD codes are used to construct highly nonlinear functions.
The known construction by Cramer \textit{et al.} \cite{CDFPW} for systematic AMD codes
turns out to be based on a special kind of functions with high nonlinearity.
\end{abstract}

{\bf Keywords}: {\small Algebraic manipulation detection code, highly nonlinear function,
nonlinearityp, partial nonlinearity.}


\section{Introduction}

Algebraic manipulation detection (AMD) codes were first introduced by Cramer \textit{et al.} \cite{CDFPW}
to convert linear secret sharing schemes into robust secret sharing schemes
and build nearly optimal robust fuzzy extractors.
An AMD code can be viewed as an authentication code without a secret key.
Generally speaking, an AMD code consists of a probabilistic encoding map $E$ and
a determined decoding function $\dec$, where $E$ encodes a plaintext $s$ into a ciphertext $g\in_R G_s$
such that any tampering will be detected, except with a small constant error probability.
For AMD codes, we consider the attack model such that an adversary may manipulate
the valid message $g=E(s)$ by adding some offset $\Delta$ of his choice.
The attack model is divided into two sub-models by distinguishing two different settings:
the adversary has full knowledge of the source (the strong attack model) and
the adversary has no knowledge about the source (the weak attack model).
The main objective for an AMD code is, for a given tag size, to minimize the success probability
of an adversary such that $g+\Delta \in G_{s'}$, i.e., $\dec(g')=s'\ne\dec(g)=s$.
Here the tag size denotes the difference between the length of the plaintext and its corresponding ciphertext.

Upon to now, there are mainly two kinds of known constructions for AMD codes.
One is via algebraic methods. In \cite{CDFPW}, Cramer \textit{et al.} proposed a construction of AMD codes
with nearly optimal tag size based on polynomial evaluations. In \cite{KW}, Reed-Muller codes were
included to construct AMD codes.
Later in \cite{CPX}, linear codes such as BCH codes were included to generate AMD codes with small tag size.
The other one is via combinatorial method, which generates AMD codes by carefully designing
the combinatorial structures behind, i.e., the structure of image sets of the probabilistic encoding map $E$.
In \cite{CFP}, Cramer \textit{et al.} first introduced a kind of differential structures to construct AMD codes.
In \cite{PS}, Paterson and Stinson characterized optimal AMD codes with various types of
generalized external difference families (say, strong external difference families)
for different merits of optimality, respectively.
In \cite{HP} and \cite{SM}, for the weak attack model, combinatorial characterizations were given
for AMD codes via weighted external difference families.
In the past few years, many efforts have been devoted to construct AMD codes and their corresponding
generalized external difference families (see \cite{JL,LNC,MS,PS,WYF,WYFF}, and the references therein).

In cryptography, another interesting topic closely related to authentication codes
are functions with high nonlinearity \cite{CD2004}.
Highly nonlinear functions such as bent functions received much attention not only for their applications
in cryptography (for instances, combining keystream generators for stream ciphers \cite{CDR1998},
$S$-boxes for block ciphers \cite{CD2007,N}, functions with optimal algebraic immunity \cite{TCT}, 
and authentication codes \cite{CD2006,DN2004}),
sequences design \cite{DHM,OSW} and coding theory \cite{CCD,WLZ}, but also for
their close relationship with combinatorial designs \cite{Dillon,R}.

In this paper, we study the relationship between AMD codes and highly nonlinear functions.
On one hand, we propose a generic construction of AMD codes via functions with high nonlinearity.
By choosing special highly nonlinear functions such as perfect nonlinear functions,
a few infinite classes of AMD codes with new parameters can be generated for both weak and strong attack models.
For the weak attack model, $R$-optimal AMD codes have asymptotically optimal tag size can be constructed.
For the strong attack model, some AMD codes generated by our construction are proved to have
the minimum possible probability of successful tampering.
On the other hand, we try to construct highly nonlinear functions from known AMD codes.
Based on a subclass of AMD codes with more strict assumptions, highly nonlinear functions can be generated,
where their nonlinearities are determined by the parameters of the corresponding AMD codes.
Especially, we prove that the known construction in \cite[Theorem 2]{CDFPW} can also be explained
by highly nonlinear functions.

The remainder of this paper is organized as follows.
In Section \ref{sec-preliminary}, we introduce some preliminaries about AMD codes.
In Section \ref{sec-construction-weak}, we construct systematic AMD codes
via highly nonlinear functions for the weak attack model, whereas
Section \ref{sec-construction-strong} is devoted to construct systematic
AMD codes under the strong attack model.
In Section \ref{sec-HNF}, highly nonlinear functions are constructed based on known systematic AMD codes.
Conclusion is drawn in Section \ref{sec-conclusion}.


\section{Preliminaries}
\label{sec-preliminary}

In this section, we recap some notation, definitions and results about AMD codes.

\begin{definition}
\label{def_AMD}
Let $S$ be a set of plaintext messages with size $m$ termed the \textit{source space},
and $G$ be the \textit{encoded message space}, which is usually an Abelian group of order $n$.
Consider a pair of a probabilistic encoding map $E: S \rightarrow G$ and
a deterministic decoding function $\dec: G \rightarrow S \cup \{\bot\}$ such that
$\dec(E(s)) = s$ with probability $1$ for any $s \in S$.
Let $G_s$ be the set of \textit{valid} encodings of $s \in S$, i.e.,
$G_{s}\triangleq \{g\in G: \dec(g)=s\}$.
\begin{itemize}
\item[(1)] The pair $(E,\dec)$ is called a \textit{strong} $(m,n,\rho)$
\textit{algebraic manipulation detection} (AMD) \textit{code} if for any $s\in S$,
$\Delta \in G \setminus \{0\}$, the probability of $\dec(E(s)+\Delta)\not \in \{s,\bot\}$
is at most $\rho$, i.e., $\pr(\dec(E(s)+\Delta) \not \in \{s,\bot\}) \le \rho\leq 1$.
\item[(2)] The pair $(E,\dec)$ is called a \textit{weak} $(m,n,\rho)$-AMD \textit{code} if for any
$\Delta \in G \setminus \{0\}$ and any random $s\in_R S$ rather than an arbitrary one, the probability
$\sum_{s\in S}\pr(s)\sum_{g \in G_s}\pr(E(s)=g)\pr(\dec(g+\Delta)) \not \in \{s,\bot\}) \le \rho$.
\item[(3)] An AMD code $(E,\dec)$, whether strong or weak, is called \textit{systematic}
if $S=A_1$ is an Abelian group, $G$ is an Abelian group $A_1 \times A_2 \times B$, and the encoding has the form
\begin{equation}\label{eqn_def_AMD}
E: A_1 \rightarrow A_1\times A_2\times B\text{ \ with } E(s) = (s,x,f(s,x))
\end{equation}
for some function $f: A_1 \times A_2 \rightarrow B$ and $x \in_R A_2$.
For a systematic AMD code, the decoding function is naturally given by
\begin{equation}\label{eqn_decoding}
\dec(s,x,t) = \begin{cases}s, &\text{if } t=f(s,x),\\
\bot, &\text{otherwise}.\\
\end{cases}
\end{equation}
\end{itemize}
\end{definition}

Note that by randomly adding $(\delta_1, \delta_2, \delta_3) \in A_1 \times A_2 \times A_3$
to $(s,x,f(s,x))$, we can make $(s,x,f(s,x))$ unreadable to the adversary.

Throughout this paper, we fix the following notation for AMD codes.
\begin{itemize}
\item An $(m,n,\rho)$-AMD code is said to have \textit{equiprobable sources} if
$\pr(s) = \frac{1}{m}$ for any $s \in S$.
\item An $(m,n,\rho)$-AMD code is said to be \textit{equiprobable encoding} if
$\pr(E(s)=g) = \frac{1}{|G_s|}$ for any $s \in S$ and $g \in G_s$.
\item An $(m,n,\rho)$-AMD code is said to be \textit{uniform} if
$|G_s|$ is constant for any $s \in S$.
\item A uniform $(m,n,\rho)$-AMD code with $|G_s|=t$ for $s\in S$ is said to
be \textit{$t$-regular} if it has equiprobable sources and equiprobable encoding.
\end{itemize}

For a systematic AMD code, if $\pr(E(s)=(s,x=x_0,f(s,x_0))\neq 0$ for any $s \in A_1$ and $x \in_R A_2$
that is for any $x_0\in A_2$, $\pr(x=x_0)>0$,
then it is $|A_2|$-uniform.
Thus, an equiprobable encoding systematic AMD code with equiprobable sources is $|A_2|$-regular.

\begin{definition}[\cite{CDFPW}]
The \textit{tag size} of an $(m,n,\rho)$-AMD code is $\varpi=\log|G|-\log|S|=\log n-\log m$.
\end{definition}

For the convenience of theoretic analysis, for any $u,k\in \N$, define \textit{effective tag size} as
$\varpi^*(k,u)=\min\{\log|G|\}-u$, where the minimum is over all $(|S|,|G|,\rho)$-AMD codes
such that $|S|\geq 2^{u}$ and $\rho\leq 2^{-k}$.
In \cite{CDFPW}, Cramer \textit{et al.} derived a lower bound for $\varpi^*(k,u)$ as follows.

\begin{lemma}[\cite{CDFPW}]
\label{lemma_tag}
For any $u,k\in \N$, the effective tag size is lower bounded by
\begin{equation*}
\varpi^*(k,u)\geq 2k-2^{-u+1}\geq 2k-1
\end{equation*}
for strong AMD codes, and
\begin{equation*}
\varpi^*(k,u)\geq k-2^{-u+1}\geq k-1
\end{equation*}
for weak AMD codes, respectively.
\end{lemma}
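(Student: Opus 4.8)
The plan is to derive both inequalities from a single averaging argument over the nonzero offsets, and to gain the extra factor of two in the strong case from a separate per-source min-entropy estimate. Write $n=|G|$, $m=|S|$, let $G_s=\{g\in G:\dec(g)=s\}$ be the valid encodings of $s$, and put $N=\sum_{s\in S}|G_s|$ for the total number of valid codewords, so that $N\ge m$. The engine of the proof is the identity that, for a fixed source $s$, the pointwise success quantity $P_s(\Delta)=\sum_{g\in G_s}\pr(E(s)=g)\,\mathbf{1}[\dec(g+\Delta)\notin\{s,\bot\}]$ satisfies $\sum_{\Delta\ne 0}P_s(\Delta)=N-|G_s|$; this holds because $g+\Delta$ ranges over $G\setminus\{g\}$ as $\Delta$ ranges over $G\setminus\{0\}$, so the inner sum merely counts the valid encodings of sources other than $s$.

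For the weak model I would average $P(\Delta)=\sum_s\pr(s)P_s(\Delta)$ over the $n-1$ nonzero offsets. By the identity, $\sum_{\Delta\ne0}P(\Delta)=\sum_s(1-\pr(s))|G_s|\ge\sum_s(1-\pr(s))=m-1$, using $|G_s|\ge1$ and $\sum_s\pr(s)=1$; combined with the security requirement $\max_{\Delta\ne0}P(\Delta)\le\rho$ this gives $\rho\ge(m-1)/(n-1)$, i.e. $n\ge 1+(m-1)/\rho$. For the strong model the strategy is the same but the security requirement is per source, so averaging over $\Delta$ for a single fixed $s$ yields $N-|G_s|\le(n-1)\rho$ for every $s$; choosing the source with the fewest encodings (so that $|G_s|\le N/m$) gives $n-1\ge N(m-1)/(m\rho)$.

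The extra factor of two now comes from bounding $N$ from below, and this is where the strong adversary's knowledge of the source enters. I claim $|G_s|\ge 1/\rho$ for every $s$: letting $g^\ast$ maximise $\pr(E(s)=g)$ and taking $\Delta=g'-g^\ast$ for any $g'\in G_{s'}$ with $s'\ne s$ forces $\dec(E(s)+\Delta)=s'\notin\{s,\bot\}$ whenever $E(s)=g^\ast$, so $\rho\ge P_s(\Delta)\ge\max_g\pr(E(s)=g)\ge 1/|G_s|$. Summing gives $N\ge m/\rho$, and substituting this into $n-1\ge N(m-1)/(m\rho)$ produces $n\ge 1+(m-1)/\rho^2$.

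It remains to translate these into tag-size bounds. Since $\varpi^\ast(k,u)$ minimises $\log n$ over codes with $m\ge 2^u$ and $\rho\le 2^{-k}$, the weak estimate gives $n\ge 1+(2^u-1)2^{k}>2^{u+k}(1-2^{-u})$ and the strong one gives $n\ge 1+(2^u-1)2^{2k}>2^{u+2k}(1-2^{-u})$. Taking logarithms and invoking the elementary inequality $\log_2(1-2^{-u})\ge -2^{-u+1}$ --- valid for $u\ge1$ because $1-x\ge 4^{-x}$ on $0<x\le\tfrac12$ --- yields $\varpi^\ast(k,u)=\log n-u\ge k-2^{-u+1}$ in the weak case and $\ge 2k-2^{-u+1}$ in the strong case, and the coarser bounds $\ge k-1$ and $\ge 2k-1$ follow from $2^{-u+1}\le1$. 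I expect the only genuinely delicate step to be the min-entropy inequality $|G_s|\ge1/\rho$: it is the sole place where the strong attack model (two factors of $k$) is separated from the weak one (a single factor), and it must be phrased so that the subsequent averaging loses no constant.
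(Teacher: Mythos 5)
The paper does not prove this lemma; it is quoted from \cite{CDFPW} without proof, so there is nothing internal to compare against. Your argument is correct and is essentially the standard one from \cite{CDFPW}: the averaging identity $\sum_{\Delta\ne 0}P_s(\Delta)=N-|G_s|$ is the random-offset attack that yields $n\ge 1+(m-1)/\rho$ in the weak model, and the guessing attack $\Delta=g'-g^\ast$ giving $|G_s|\ge 1/\rho$ is exactly what buys the second factor of $k$ in the strong model; the final calculus step $\log_2(1-2^{-u})\ge -2^{-u+1}$ for $0<2^{-u}\le\frac12$ checks out by concavity of $1-x-4^{-x}$ with equality at the endpoints. The only caveat worth recording is that both the existence of a source $s'\ne s$ (needed for the guessing attack) and the inequality $2^{-u+1}\le 1$ require $u\ge 1$, i.e.\ $m\ge 2$, which is the implicit convention for $u\in\N$ here.
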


Besides the above bound for the effective tag size of an AMD code,
the following theoretic bounds on the parameters are also known.

\begin{lemma}[\cite{PS}]
\label{lemma_bound_para}
For any weak $(m,n,\rho)$-AMD code, we have
\begin{equation*}
\rho \geq \frac{a(m-1)}{m(n-1)},
\end{equation*}
where $a=\sum_{s\in S}|G_s|$.
\end{lemma}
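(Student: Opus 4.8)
The plan is to run the standard averaging argument over the offset $\Delta$. Because a weak $(m,n,\rho)$-AMD code must withstand \emph{every} nonzero offset, $\rho$ upper bounds the adversary's success probability $P(\Delta)$ for each individual $\Delta\in G\setminus\{0\}$, and therefore also upper bounds the average of $P(\Delta)$ over all $n-1$ such offsets. Establishing the claimed bound thus reduces to evaluating this average.

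First I would write, for a fixed offset $\Delta$,
\[
P(\Delta)=\sum_{s\in S}\pr(s)\sum_{g\in G_s}\pr(E(s)=g)\,\mathbf{1}\!\left[\dec(g+\Delta)\notin\{s,\bot\}\right],
\]
where the indicator is legitimate since $\dec$ is deterministic. Summing over $\Delta\in G\setminus\{0\}$ and exchanging the order of summation, the problem localizes to the inner quantity $\sum_{\Delta\neq 0}\mathbf{1}[\dec(g+\Delta)\notin\{s,\bot\}]$ for each fixed $s$ and $g\in G_s$.

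The key combinatorial step is to evaluate this inner count. As $\Delta$ ranges over $G\setminus\{0\}$, the point $g+\Delta$ ranges over $G\setminus\{g\}$, so the inner sum counts exactly the encodings $g'\neq g$ with $\dec(g')=s'$ for some $s'\neq s$, i.e.\ $g'\in\bigcup_{s'\neq s}G_{s'}$. Since the sets $G_{s'}$ are pairwise disjoint and $g\in G_s$, the point $g$ never lies in this union; hence the restriction $g'\neq g$ discards no term, and the count equals $\sum_{s'\neq s}|G_{s'}|=a-|G_s|$. I expect this bookkeeping---verifying that the $\Delta\neq 0$ constraint removes nothing and keeping the disjointness of the $G_{s'}$ in view---to be the only genuinely delicate point; everything else is routine.

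Finally, using $\sum_{g\in G_s}\pr(E(s)=g)=1$ (every encoding of $s$ lands in $G_s$) together with the equiprobable-source convention $\pr(s)=\tfrac1m$ carried by $s\in_R S$, the double sum collapses to $\tfrac1m\sum_{s\in S}(a-|G_s|)=\tfrac1m(ma-a)=\tfrac{a(m-1)}{m}$. Dividing by the number of offsets $n-1$ yields the average value $\tfrac{a(m-1)}{m(n-1)}$, and since $\rho$ dominates this average the desired inequality $\rho\geq\tfrac{a(m-1)}{m(n-1)}$ follows.
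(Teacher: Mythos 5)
The paper gives no proof of this lemma---it is quoted from \cite{PS}---but your averaging argument over the $n-1$ nonzero offsets is exactly the standard proof from that source, and every step checks out: the fibres $G_{s'}$ are pairwise disjoint, $g+\Delta$ sweeps $G\setminus\{g\}$ so the constraint $\Delta\neq 0$ indeed discards nothing from $\bigcup_{s'\neq s}G_{s'}$, and $\sum_{g\in G_s}\pr(E(s)=g)=1$ collapses the inner sum. The only hypothesis beyond the bare definition is $\pr(s)=\frac{1}{m}$, which you correctly flag as the source of the $\frac{1}{m}$ factor; that is the standing equiprobable-source assumption in the weak model of \cite{PS}, consistent with the $s\in_R S$ notation in Definition~\ref{def_AMD}.
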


Especially, for $t$-regular weak AMD codes, the probability of successful tampering is lower bounded as follows.

\begin{lemma}[\cite{PS,SM}]
\label{lemma_bound_uniform}
For any $t$-regular weak $(m,n,\rho)$-AMD codes, we have
\begin{equation}\label{eqn_Bound_lambda}
\rho\geq \left\lceil\frac{t^2m(m-1)}{n-1}\right\rceil\frac{1}{tm}.
\end{equation}
\end{lemma}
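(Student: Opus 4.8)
The plan is to turn the defining inequality of a weak AMD code into a counting problem and then finish with an averaging argument sharpened by an integrality (ceiling) step. First I would specialize the weak success probability to a $t$-regular code. Such a code has equiprobable sources, $\pr(s)=\frac{1}{m}$, and equiprobable encoding, $\pr(E(s)=g)=\frac{1}{t}$ for each $g\in G_s$, so the quantity controlled in Definition \ref{def_AMD}(2) becomes, for a fixed nonzero offset $\Delta\in G\setminus\{0\}$,
$$P(\Delta)=\frac{1}{mt}\sum_{s\in S}\sum_{g\in G_s}\left[\dec(g+\Delta)\notin\{s,\bot\}\right]=\frac{N(\Delta)}{mt},$$
where $N(\Delta)$ counts the pairs $(g,g')$ with $g\in G_s$, $g'\in G_{s'}$, $s'\neq s$, and $g'=g+\Delta$. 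The AMD condition forces $P(\Delta)\le\rho$ for every $\Delta\neq 0$, hence $\rho\ge \max_{\Delta\neq 0}N(\Delta)/(mt)$.

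The next step is to evaluate $\sum_{\Delta\neq 0}N(\Delta)$ by double counting. Because $\dec$ is a deterministic function, the valid-encoding sets $\{G_s\}_{s\in S}$ are pairwise disjoint; therefore an ordered pair $(g,g')$ of valid encodings decoding to distinct sources is counted by exactly one term $N(\Delta)$, namely the one with $\Delta=g'-g\neq 0$. Consequently $\sum_{\Delta\neq 0}N(\Delta)$ equals the total number of such pairs, which I would compute as
$$\Big(\sum_{s\in S}|G_s|\Big)^2-\sum_{s\in S}|G_s|^2=(mt)^2-mt^2=mt^2(m-1),$$
using $|G_s|=t$ for all $s\in S$.

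Finally I would combine the two pieces. Averaging over the $n-1$ nonzero offsets gives $\max_{\Delta\neq 0}N(\Delta)\ge \frac{1}{n-1}\sum_{\Delta\neq 0}N(\Delta)=\frac{t^2m(m-1)}{n-1}$. Since each $N(\Delta)$ is a nonnegative integer, this maximum is in fact at least $\left\lceil t^2m(m-1)/(n-1)\right\rceil$, and substituting into $\rho\ge \max_{\Delta\neq 0}N(\Delta)/(mt)$ yields exactly \eqref{eqn_Bound_lambda}. The only genuinely delicate points are invoking the disjointness of the $G_s$ so that the double count is exact (no pair omitted or counted twice), and applying the ceiling to the integer $\max_{\Delta}N(\Delta)$ \emph{before} dividing by $mt$; beyond that the argument is routine bookkeeping, so I do not anticipate a serious obstacle.
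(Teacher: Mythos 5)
Your argument is correct: the specialization of the weak-AMD condition to the $t$-regular case, the double count of ordered pairs of valid encodings with distinct decodings giving $\sum_{\Delta\neq 0}N(\Delta)=t^2m(m-1)$, the averaging over the $n-1$ nonzero offsets, and the application of the ceiling to the integer $\max_{\Delta\neq 0}N(\Delta)$ before dividing by $mt$ are all sound. The paper states this lemma without proof, citing \cite{PS,SM}, and your derivation is essentially the standard counting/averaging argument used there, so there is nothing further to reconcile.
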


\begin{definition}[\cite{PS,SM}]
\label{def_R_op_PS}
A weak AMD code is $R$-\textit{optimal} if its parameters meet the bound in Lemma \ref{lemma_bound_para}
with equality (or Lemma \ref{lemma_bound_uniform} for the $t$-regular case).
Here, ``$R$" indicates that random choosing $\Delta$ is an optimal strategy for the adversary.
\end{definition}

\begin{lemma}[\cite{PS}]
\label{lemma_bound_G}
For any strong $(m,n,\rho)$-AMD code, we have
\begin{equation}\label{eqn_bound_m1021}
{\rho}_s \geq \frac{1}{|G_s|}
\end{equation}
for any source $s \in S$, where $\rho_s$ is the probability of successful tampering
given the source $s \in S$ for a random chosen $\Delta$.
\end{lemma}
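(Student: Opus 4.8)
The plan is to read $\rho_s$ as the largest probability of successful tampering that the adversary, knowing the source $s$, can force by a freely (adversarially) chosen offset $\Delta \in G \setminus \{0\}$, and then to exhibit one particular offset that already succeeds with probability at least $1/|G_s|$. Since the success probability of any single offset is a lower bound for this optimal quantity, producing one good $\Delta$ suffices, and no maximization needs to be analyzed directly.

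First I would record two structural facts. Because $\dec(E(s))=s$ holds with probability $1$, the random encoding $E(s)$ is supported inside $G_s$, so that $\sum_{g\in G_s}\pr(E(s)=g)=1$. As $|G_s|$ counts the valid encodings of $s$, the pigeonhole principle yields some $g^{*}\in G_s$ with $\pr(E(s)=g^{*})\ge 1/|G_s|$. Moreover, since $\dec$ is a genuine deterministic function the sets $\{G_{s'}\}_{s'\in S}$ are pairwise disjoint, and each $G_{s'}$ is nonempty, again because $\dec(E(s'))=s'$ with probability $1$.

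Next I would build the offset. Assuming the code is nondegenerate, fix any source $s'\neq s$ and any valid encoding $g'\in G_{s'}$, and set $\Delta^{*}=g'-g^{*}$, which is legitimate as $G$ is an Abelian group. Disjointness of $G_s$ and $G_{s'}$ forces $g^{*}\neq g'$, hence $\Delta^{*}\neq 0$, so $\Delta^{*}$ is an admissible choice for the adversary. Whenever the encoder outputs $g^{*}$ the tampered word is $g^{*}+\Delta^{*}=g'\in G_{s'}$, giving $\dec(g^{*}+\Delta^{*})=s'\notin\{s,\bot\}$, a successful manipulation. Thus the success probability of this one offset is at least $\pr(E(s)=g^{*})\ge 1/|G_s|$, and therefore $\rho_s\ge 1/|G_s|$.

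I do not expect a real obstacle in the computation itself, which is immediate once the quantities are arranged; the delicate point is rather the interpretation of $\rho_s$. The bound concerns the optimal, adversarially chosen offset given $s$, and the proof only needs a single witnessing $\Delta^{*}$ rather than an analysis of a uniformly random $\Delta$, whose \emph{average} success would instead be $(a-|G_s|)/(n-1)$ with $a=\sum_{s'\in S}|G_{s'}|$. I would also make explicit the implicit nondegeneracy hypothesis $m\ge 2$, since without a second source $s'$ there is no target to move $g^{*}$ into and the statement is vacuous.
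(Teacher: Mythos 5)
The paper states this lemma without proof, citing Paterson and Stinson; your argument is correct and is precisely the standard one behind the ``$G$-optimal'' terminology (guess the most likely valid encoding $g^{*}$ of $s$ and shift it into another decoding class). Your interpretive point is also well taken: the phrase ``for a random chosen $\Delta$'' in the statement is loose, and the bound only makes sense for $\rho_s$ read as the success probability of the adversary's best (or at least the exhibited) offset, which is exactly how Definition \ref{def_G_op_PS} uses it; the nondegeneracy caveat $m\ge 2$ is likewise correct but implicit throughout the paper.
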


\begin{definition}[\cite{PS}]
\label{def_G_op_PS}
A strong AMD code is $G$-\textit{optimal} if its parameters meet
the bound in Lemma \ref{lemma_bound_G} with equality.
Here, ``$G$" indicates that guessing the most likely encoding is an optimal strategy for the adversary.
\end{definition}


\section{Weak algebraic manipulation detection codes from highly nonlinear functions}
\label{sec-construction-weak}

In this section, we propose a construction for systematic weak AMD codes via highly nonlinear functions.
Systematic weak AMD codes with asymptotically optimal effective tag size are constructed in
Corollaries \ref{coro_LF} and \ref{cor_optimalm}, and $R$-optimal systematic weak AMD codes
are constructed in Corollaries \ref{coro_MM} and \ref{cor_weak_Dillon}.

First of all, we recall some necessary definitions about nonlinearity of functions.

Let $(A,+)$ and $(B,+)$ be two Abelian groups with order $n$ and $m$, respectively.
Let $f$ be a function from $A$ to $B$.
One robust way to measure the nonlinearity of a function $f$ from $A$ to $B$ is
to use the derivatives $D_a(f(x))=f(x+a)-f(x)$ for $a\in A$,
which is closely related to differential cryptanalysis \cite{BS,N}.

\begin{definition}[\cite{N}]
\label{def_nonliearity}
The \textit{nonlinearity} $P_f$ of a function $f$ from $A$ to $B$ is defined as
\begin{equation}\label{eqn_def_Pf}
P_f\triangleq\max_{a\in A\setminus \{0\}}\max_{b\in B} \pr\left(D_a(f(x))=b\right)=
\max_{a\in A\setminus\{0\}}\max_{b\in B}\frac{|\{x\in A: ~ D_a(f(x))=b\}|}{|A|},
\end{equation}
where $\pr(D_a(f(x))=b)$ denotes the probability of the occurrence of the event $D_a(f(x))=b$.
\end{definition}

\begin{remark}
The \textit{Hamming distance} between two functions $f$ and $g$
from $A$ to $B$ is defined to be $d(f,g) = |\{x \in A: f(x) \neq g(x)\}|$.
A function $f$ is \textit{linear} if and only if $f(x+y) = f(x) + f(y)$ for all $x,y \in A$.
A function $g$ is \textit{affine} if and only if $g = f + b$, where $f$ is linear and $b$ is a constant.
An alternative method of measuring the nonlinearity of a function $f: A \rightarrow B$ is given
by the minimum Hamming distance between $f$ and all possible affine functions from $A$ to $B$ \cite{OSW}.
This measure of nonlinearity is closely related to linear cryptanalysis \cite{M}.
For the relationship between these two definitions of nonlinearity,
the reader is referred to \cite{CD,CD2004}, for instances.
In this paper, the former definition of nonlinearity is used.
\end{remark}

It is easy to check (see, for example, \cite{CD2004}) that $P_f=1$ if $f$ is a linear function
from $A$ to $B$, and $P_f\geq \frac{1}{|B|}$ for any function $f$ from $A$ to $B$.
The smaller the value of $P_f$, the higher the corresponding nonlinearity of $f$.

\begin{definition}[\cite{N}]
A function $f$ from $A$ to $B$ is said to have \textit{perfect nonlinearity} if $P_f=\frac{1}{|B|}$.
\end{definition}

\begin{construction}
\label{cons_nonlinear_weak}
Let $f$ be a function from $A=A_1\times A_2$ to $B$ and let $S=A_1$ be a subgroup of $A$.
Define a probabilistic encoding map $E_f: A_1 \rightarrow G=A\times B=A_1\times A_2 \times B$ as
\begin{equation}\label{eqn_consB}
E_f(S_1) = (S_1,S_2,f(S_1,S_2)),
\end{equation}
where $S_2 \in_R A_2$.
\end{construction}

By the probabilistic encoding map $E_f$ and the corresponding deterministic decoding function
given by \eqref{eqn_decoding}, we can define a systematic AMD code $(E_f,\dec)$ from $A_1$ to
$G=A\times B=A_1\times A_2\times B$.
Note that a possible successful tampering should satisfy $\Delta\in (A_1\setminus \{0\})\times A_2\times B$.
However, for the nonlinearity, we should consider all possible
$\Delta\in A_1\times A_2\times B \setminus \{(0,0,0)\}$.
Thus, to the convenience of analysis, we introduce the partial nonlinearity of a function, which
only considers the case $\Delta\in (A_1\setminus\{0\})\times A_2 \times B$.

\begin{definition}
\label{def_partial_nonlinearity}
The \textit{partial nonlinearity} $\Psi_f(A_1)$ of a function $f$ from $A=A_1\times A_2$ to $B$ is defined as
\begin{equation}\label{eqn_def_Omega}
\Psi_f(A_1)\triangleq\max_{a_1\in A_1\setminus\{0\}}\max_{a_2\in A_2}\max_{b\in B} \pr\left(D_{(a_1,a_2)}(f(x))=b\right)
=\max_{a_1\in A_1\setminus\{0\}}\max_{a_2\in A_2}\max_{b\in B}\frac{|\{x\in A: ~ D_{(a_1,a_2)}(f(x))=b\}|}{|A|},
\end{equation}
where $A_1$ is a subgroup of $A$ and $\pr\left(D_{(a_1,a_2)}(f(x))=b\right)$ denotes the probability
of the occurrence of the event $D_{(a_1,a_2)}(f(x))=b$.
\end{definition}

\begin{remark}
\label{remark_partial_nonliearity}
By Definitions \ref{def_nonliearity} and \ref{def_partial_nonlinearity}, we have
$P_f\geq \Psi_f(A_1)$ for any subgroup $A_1$ of $A$.
\end{remark}

The parameters of the constructed AMD code have the following relationship with the nonlinearity of $f$.

\begin{theorem}
\label{thm_nonlinear_weak}
If the function $f$ from $A=A_1\times A_2$ to $B$ with partial nonlinearity $\Psi_f(A_1)$
has equiprobable sources and $E_f$ is equiprobable encoding, then the systematic
weak AMD code $(E_f, \dec)$ generated by Construction \ref{cons_nonlinear_weak} has parameters
$(n_1,n_1n_2m,\Psi_f(A_1)\leq P_f)$, where $|A_1|=n_1$, $|A_2|=n_2$, $|B|=m$ and $P_f$
denotes the nonlinearity of $f$.
\end{theorem}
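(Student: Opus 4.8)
The plan is to compute directly the maximal success probability of the weak adversary and to recognise it as the partial nonlinearity $\Psi_f(A_1)$; the parameter count and the bound $\Psi_f(A_1)\le P_f$ then follow at once. First I would fix the parameters: since $S=A_1$ we have $m=|S|=n_1$, and since $G=A_1\times A_2\times B$ we have $n=|G|=n_1n_2m$. The hypothesis of equiprobable sources gives $\pr(s)=1/n_1$ for every $s\in A_1$, while $G_s=\{(s,x,f(s,x)):x\in A_2\}$ has size $n_2$, so equiprobable encoding gives $\pr(E_f(s)=(s,x,f(s,x)))=1/n_2$ for each $x\in A_2$.

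Next I would determine, for a fixed nonzero offset $\Delta=(\delta_1,\delta_2,\delta_3)\in G\setminus\{(0,0,0)\}$, exactly when tampering succeeds. Writing $g=(s,x,f(s,x))$ we have $g+\Delta=(s+\delta_1,\,x+\delta_2,\,f(s,x)+\delta_3)$, and by the decoding rule \eqref{eqn_decoding} this decodes to $s+\delta_1$ precisely when $f(s,x)+\delta_3=f(s+\delta_1,x+\delta_2)$, that is when $D_{(\delta_1,\delta_2)}(f(s,x))=\delta_3$, and to $\bot$ otherwise. The decoded value therefore lies outside $\{s,\bot\}$ if and only if, in addition, $s+\delta_1\ne s$, i.e.\ $\delta_1\ne 0$. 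Consequently every offset with $\delta_1=0$ contributes success probability $0$, which is the structural reason why only the directions $\delta_1\in A_1\setminus\{0\}$ can matter.

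Then I would assemble the weak success probability $P(\Delta)$ of Definition \ref{def_AMD}(2). For $\delta_1\ne 0$, summing over the equiprobable sources and encodings yields
\[
P(\Delta)=\frac{1}{n_1}\sum_{s\in A_1}\frac{1}{n_2}\left|\left\{x\in A_2:\,D_{(\delta_1,\delta_2)}(f(s,x))=\delta_3\right\}\right|=\frac{\left|\left\{(s,x)\in A:\,D_{(\delta_1,\delta_2)}(f(s,x))=\delta_3\right\}\right|}{n_1n_2},
\]
which, as $|A|=n_1n_2$, is exactly the probability appearing in \eqref{eqn_def_Omega}. Maximising over all nonzero offsets and using that those with $\delta_1=0$ give $0$, I obtain
\[
\rho=\max_{\Delta\ne 0}P(\Delta)=\max_{\delta_1\in A_1\setminus\{0\}}\,\max_{\delta_2\in A_2}\,\max_{\delta_3\in B}\frac{\left|\left\{(s,x)\in A:\,D_{(\delta_1,\delta_2)}(f(s,x))=\delta_3\right\}\right|}{n_1n_2}=\Psi_f(A_1),
\]
which is precisely Definition \ref{def_partial_nonlinearity}. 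The bound $\Psi_f(A_1)\le P_f$ is then immediate from Remark \ref{remark_partial_nonliearity}, so $(E_f,\dec)$ is a systematic weak $(n_1,n_1n_2m,\Psi_f(A_1)\le P_f)$-AMD code as claimed.

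I expect the main obstacle to be conceptual rather than computational: one must carefully separate the two requirements $\dec(g+\Delta)\ne\bot$ and $\dec(g+\Delta)\ne s$ hidden in the event $\dec(g+\Delta)\notin\{s,\bot\}$, and observe that they correspond respectively to the derivative equation $D_{(\delta_1,\delta_2)}(f(s,x))=\delta_3$ and to the constraint $\delta_1\ne 0$. It is exactly this latter constraint that collapses the maximisation over $G\setminus\{(0,0,0)\}$ to the restricted maximisation over $(A_1\setminus\{0\})\times A_2\times B$ defining $\Psi_f(A_1)$, and this is what justifies working with the partial nonlinearity rather than with $P_f$ itself.
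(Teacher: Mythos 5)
Your proposal is correct and follows essentially the same route as the paper's proof: compute the averaged success probability for a fixed offset $\Delta=(a_1,a_2,b)$ with $a_1\ne 0$ using equiprobable sources and encodings, recognise the resulting counting quotient as the quantity in Definition \ref{def_partial_nonlinearity}, and invoke Remark \ref{remark_partial_nonliearity} for $\Psi_f(A_1)\le P_f$. Your explicit verification that offsets with $\delta_1=0$ contribute zero is a point the paper only disposes of in the remark preceding Definition \ref{def_partial_nonlinearity}, but it is the same argument.
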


\begin{proof}
By Construction \ref{cons_nonlinear_weak}, we only need to prove that the probability of
successful tampering is upper bounded by $\Psi_f(A_1)$.
For any $\Delta=(a_1,a_2,b)\in G = A_1 \times A_2 \times B$ with
$a_1\in A_1\setminus \{0\}$, $a_2\in A_2$ and $b\in B$,
\begin{eqnarray}\label{miao1}
\nonumber
& & \sum_{S_1\in A_1}\pr(S'=S_1)\sum_{S_2\in A_2}\pr(E_f(S_1)=(S_1,S_2,f(S_1,S_2))\pr({\dec}((S_1,S_2,f(S_1,S_2))+\Delta)\not\in \{S_1,\bot\}) \\
& = & \sum_{S_1\in A_1}\pr(S'=S_1)\sum_{S_2\in A_2}\pr(S^*=S_2)\pr({\dec}((S_1,S_2,f(S_1,S_2))+\Delta)\not\in \{S_1,\bot\}) \\ \nonumber
& = & \sum_{S_1\in A_1}\frac{1}{|A_1|}\sum_{S_2\in A_2}\frac{1}{|A_2|}\pr(f(S_1+a_1,S_2+a_2)=f(S_1,S_2)+b) \\ \nonumber
& = & \frac{1}{|A_1||A_2|}\sum_{S_1\in A_1}\sum_{S_2\in A_2}\pr(f(S_1+a_1,S_2+a_2)=f(S_1,S_2)+b) \\ \nonumber
& = & \frac{1}{|A_1||A_2|}|\{(S',S^*)\in A: ~ f(S'+a_1,S^*+a_2)-f(S',S^*)=b\}| \\ \nonumber
& \leq & \Psi_f(A_1)\leq P_f.
\end{eqnarray}
According to Definitions \ref{def_AMD} and \ref{def_partial_nonlinearity},
we know that the systematic weak AMD code $(E_f, \dec)$ generated by Construction \ref{cons_nonlinear_weak}
has parameters $(n_1,n_1n_2m,\Psi_f(A_1)\leq P_f)$, which completes the proof.
\end{proof}

In what follows, we list some well-known highly nonlinear functions and their
corresponding systematic AMD codes as applications of Construction \ref{cons_nonlinear_weak}.


\subsection{Linear functions}

One simple but useful way to obtain functions with high nonlinearity is to use linear functions
from $(\F_{q^r},+)$ to $(\F_{q},+)$ as functions from $(\F^*_{q^r},\times)\cong (\Z_{q^r-1},+)$ to $(\F_q,+)$.

\begin{lemma}[\cite{CD2004}]
\label{lemma_linear_func}
Any nonzero linear function $L$ from $(\F_{q^r},+)$ to $(\F_{q},+)$ is a function
from $(\F^*_{q^r},\times)$ to $(\F_q,+)$ with nonlinearity $P_f=\frac{1}{q}+\frac{1}{q(q^r-1)}$.
\end{lemma}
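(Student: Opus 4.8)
The plan is to compute $P_f$ directly from its definition, exploiting that the additivity of $L$ linearizes the derivative even though the group operation on the domain is now multiplication in $\F^*_{q^r}$ (identity $1$) rather than addition. The crucial observation is that for any $a\in\F^*_{q^r}\setminus\{1\}$ (the nonidentity elements of the multiplicative group) and any $x\in\F^*_{q^r}$, additivity gives
\[
D_a(L(x)) = L(ax)-L(x) = L(ax-x) = L((a-1)x),
\]
where $ax-x=(a-1)x$ is computed in the field $\F_{q^r}$. Thus the multiplicative derivative collapses to a single evaluation of $L$ on the rescaled argument $(a-1)x$.

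Next I would use that $a\neq 1$ forces $a-1\in\F^*_{q^r}$, so the map $x\mapsto(a-1)x$ is a bijection of $\F^*_{q^r}$ onto itself. Substituting $y=(a-1)x$, the fiber count becomes
\[
|\{x\in\F^*_{q^r}: L((a-1)x)=b\}| = |\{y\in\F^*_{q^r}: L(y)=b\}|,
\]
which is independent of $a$. Hence $P_f$ reduces to $\frac{1}{q^r-1}\max_{b\in\F_q}|\{y\in\F^*_{q^r}: L(y)=b\}|$, and the task is purely to count the fibers of the linear functional $L$ restricted to the nonzero field elements.

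For that count I would invoke that a nonzero $\F_q$-linear functional $L:\F_{q^r}\to\F_q$ is surjective with kernel of $\F_q$-dimension $r-1$, so every fiber of $L$ over all of $\F_{q^r}$ has size exactly $q^{r-1}$. Restricting to $\F^*_{q^r}$ removes only the element $0$, and since $L(0)=0$ this shrinks only the fiber over $b=0$: one obtains $|L^{-1}(0)\cap\F^*_{q^r}|=q^{r-1}-1$, while $|L^{-1}(b)\cap\F^*_{q^r}|=q^{r-1}$ for every $b\neq 0$. The maximum is therefore $q^{r-1}$, giving
\[
P_f = \frac{q^{r-1}}{q^r-1} = \frac{1}{q}+\frac{1}{q(q^r-1)},
\]
where the last equality is a one-line simplification.

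The argument has no genuinely hard step; the only point demanding care is the bookkeeping of the two distinct group structures—multiplication (with identity $1$) on the domain $\F^*_{q^r}$ versus addition (with identity $0$) on the codomain $\F_q$. This is what forces the maximization in the definition of $P_f$ to range over $a\neq 1$, and it is what correctly charges the single excluded element $0$ to the fiber over $b=0$, so that it does not disturb the maximum attained at the nonzero values of $b$.
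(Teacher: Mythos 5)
Your proof is correct. Note that the paper itself gives no proof of this lemma --- it is quoted directly from the cited reference \cite{CD2004} --- so there is nothing internal to compare against; your argument (reduce $D_a(L(x))=L((a-1)x)$ via additivity, use the bijection $x\mapsto(a-1)x$ of $\F^*_{q^r}$, and count fibers of the surjective linear functional $L$ with the single element $0$ charged to the fiber over $b=0$) is the standard one and correctly yields equality $P_f=\frac{q^{r-1}}{q^r-1}=\frac{1}{q}+\frac{1}{q(q^r-1)}$, with the maximum attained at any $b\neq 0$.
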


Applying the highly nonlinear functions in Lemma \ref{lemma_linear_func}, the following corollary
follows directly from Construction \ref{cons_nonlinear_weak} and Theorem \ref{thm_nonlinear_weak}.

\begin{corollary}
\label{coro_LF}
Let $A=(\Z_{q^r-1},+)$ and $B=(\F_q,+)$. Further let $A_1=(\Z_{m_1},+)$ and $A_2=(\Z_{m_2},+)$
be two subgroups of $A$ with order $m_1$ and $m_2= \frac{q^r-1}{m_1}$, respectively.
If $\gcd(m_1,m_2)=1$, then $A \cong A_1\times A_2$.
Define the probabilistic encoding map $E_L$ from $A_1$ to $G=A_1\times A_2\times B$ as
\begin{equation}\label{m.1017.1}
E_L(s_1)= (s_1,s_2,L(\Phi(s_1,s_2))),
\end{equation}
where $s_2\in_R A_2$, $\Phi$ is an isomorphism from $\Z_{q^r-1}$ to $(\F^*_{q^r},\times)$,
and $L(x)$ is a nonzero linear function from $(\F^*_{q^r},\times)$ to $(\F_q,+)$.
If the systematic weak AMD code $(E_L, \dec)$ given by \eqref{m.1017.1} and \eqref{eqn_decoding}
has equiprobable sources and $E_L$ is equiprobable encoding,
then it is an $m_2$-regular $(m_1,(q^{r}-1)q,\frac{1}{q}+\frac{1}{q(q^r-1)})$-AMD code.
\end{corollary}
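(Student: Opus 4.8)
The plan is to exhibit $(E_L,\dec)$ as a special case of Construction \ref{cons_nonlinear_weak} and then read off its parameters from Theorem \ref{thm_nonlinear_weak}; the only step requiring genuine care is checking that composing with the isomorphism $\Phi$ does not change the nonlinearity. First I would use the hypothesis $\gcd(m_1,m_2)=1$ together with the Chinese Remainder Theorem to identify $A=(\Z_{q^r-1},+)$ with $A_1\times A_2$, so that each element of $A$ is written uniquely as $(s_1,s_2)$. Setting $f(s_1,s_2)=L(\Phi(s_1,s_2))$ then gives a well-defined function from $A_1\times A_2$ to $B=(\F_q,+)$, and feeding this $f$ into Construction \ref{cons_nonlinear_weak} produces exactly the map $E_L$ of \eqref{m.1017.1}.

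The key step is to establish $P_f=\frac{1}{q}+\frac{1}{q(q^r-1)}$. Because $\Phi$ is a group isomorphism from $(\Z_{q^r-1},+)$ to $(\F^*_{q^r},\times)$, for any $a\in A\setminus\{0\}$ we have $\Phi(x+a)=\Phi(x)\Phi(a)$; writing $y=\Phi(x)$ and $c=\Phi(a)$ (so $c\neq 1$), the additive derivative becomes $D_a(f)(x)=L(yc)-L(y)$. As $x$ ranges over $A$ the element $y$ ranges bijectively over $\F^*_{q^r}$ and $c$ ranges over $\F^*_{q^r}\setminus\{1\}$, so the solution counts defining $P_f$ coincide term by term with those defining the nonlinearity of $L$ viewed as a map from $(\F^*_{q^r},\times)$ to $(\F_q,+)$. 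Hence Lemma \ref{lemma_linear_func} applies verbatim and yields $P_f=\frac{1}{q}+\frac{1}{q(q^r-1)}$, whence $\Psi_f(A_1)\leq P_f$ by Remark \ref{remark_partial_nonliearity}. I expect this invariance argument to be the main (and essentially only) obstacle, since everything else is substitution.

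Finally I would invoke Theorem \ref{thm_nonlinear_weak}. The corollary's standing assumptions that $(E_L,\dec)$ has equiprobable sources and $E_L$ is equiprobable encoding are exactly the hypotheses of that theorem, so it delivers a systematic weak AMD code with parameters $(n_1,n_1n_2m,\Psi_f(A_1)\leq P_f)$ for $n_1=|A_1|=m_1$, $n_2=|A_2|=m_2$ and $m=|B|=q$. Since $m_1m_2=q^r-1$ the encoded space has size $n_1n_2m=(q^r-1)q$, and the success probability is bounded by $P_f=\frac{1}{q}+\frac{1}{q(q^r-1)}$. The $m_2$-regularity is immediate from the observation recorded in Section \ref{sec-preliminary} that an equiprobable encoding systematic AMD code with equiprobable sources is $|A_2|$-regular, here with $|A_2|=m_2$. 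Collecting these facts gives the asserted $m_2$-regular $(m_1,(q^r-1)q,\frac{1}{q}+\frac{1}{q(q^r-1)})$-AMD code.
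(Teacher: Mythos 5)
Your proposal is correct and follows exactly the route the paper intends: the paper offers no separate proof, stating only that the corollary ``follows directly from Construction \ref{cons_nonlinear_weak} and Theorem \ref{thm_nonlinear_weak}'' applied to the function of Lemma \ref{lemma_linear_func}, and your write-up simply fills in the details (the CRT identification, the observation that the isomorphism $\Phi$ converts additive derivatives of $f=L\circ\Phi$ into multiplicative derivatives of $L$ so that $P_f=P_L$, and the $m_2$-regularity remark from Section \ref{sec-preliminary}). No gaps.
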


\begin{corollary}
\label{cor_optimalm}
Let $r \in \N$ and $m_2$ be a factor of $q^r-1$. Further let $m_1=\frac{q^r-1}{m_2}$,
$u=\lfloor\log m_1 \rfloor$, and $k=\lfloor\log \frac{q^{r}-1}{q^{r-1}}\rfloor$.
If $\gcd(m_1,m_2)=1,$ then the effective tag size $\varpi^*(k,u)$ for weak AMD codes satisfies
$$k-1 \leq \varpi^*(k,u) <k +1+ \log \frac{m_2q^r}{q^r-1}.$$
The systematic weak AMD code in Corollary \ref{coro_LF} has an asymptotically optimal
effective tag size with respect to the bound in Lemma \ref{lemma_tag},
i.e., $\lim_{k\rightarrow \infty}\frac{\log|G|-\log|A_1|}{k-1}=1$.
\end{corollary}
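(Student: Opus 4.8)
The statement comprises two claims: the two-sided estimate on the effective tag size, and the asymptotic optimality of the code from Corollary~\ref{coro_LF}. The plan is to obtain the lower bound by quoting the general bound, to obtain the upper bound by using that very code as an explicit admissible witness, and to settle the limit by computing the tag size directly. The lower bound $k-1\le\varpi^*(k,u)$ needs nothing beyond Lemma~\ref{lemma_tag}: its weak case gives $\varpi^*(k,u)\ge k-2^{-u+1}\ge k-1$ for all $u,k\in\N$.

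For the upper bound I would exhibit the code of Corollary~\ref{coro_LF} and check admissibility for the pair $(k,u)$. Its source has size $m_1\ge 2^{\lfloor\log m_1\rfloor}=2^{u}$, and its error probability simplifies to
\[
\rho=\frac{1}{q}+\frac{1}{q(q^{r}-1)}=\frac{q^{r-1}}{q^{r}-1},
\]
so that $\log(1/\rho)=\log\frac{q^{r}-1}{q^{r-1}}$ and hence $\rho\le 2^{-\lfloor\log(1/\rho)\rfloor}=2^{-k}$. Thus the code is admissible and $\varpi^*(k,u)\le\log|G|-u=\log\big((q^{r}-1)q\big)-u$. To bring this into the stated form I would use the two identities $\log\big((q^{r}-1)q\big)=\log\frac{q^{r}-1}{q^{r-1}}+r\log q$ and $r\log q-\log m_1=\log\frac{m_2q^{r}}{q^{r}-1}$ (the latter from $m_1=\frac{q^{r}-1}{m_2}$), which together yield
\[
\varpi^*(k,u)\le \log\frac{1}{\rho}+\big(\log m_1-u\big)+\log\frac{m_2q^{r}}{q^{r}-1}.
\]
Writing $\log\frac{1}{\rho}=k+\{\log(1/\rho)\}$ and $\log m_1-u=\{\log m_1\}$, the stated inequality collapses, after cancelling the common term $\log\frac{m_2q^{r}}{q^{r}-1}$, to the single statement $\{\log(1/\rho)\}+\{\log m_1\}<1$.

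I expect this fractional-part inequality to be the main obstacle, and the only place where the hypothesis $\gcd(m_1,m_2)=1$ could enter nontrivially. The crude estimate — replacing each floor by its value up to an additive unit — spends a full unit of slack on each of the two floors and only delivers the weaker bound $k+2+\log\frac{m_2q^{r}}{q^{r}-1}$; sharpening this to the stated $+1$ forces one to control the two fractional parts $\{\log(1/\rho)\}$ and $\{\log m_1\}$ jointly rather than separately. Should $\{\log(1/\rho)\}+\{\log m_1\}<1$ fail for some admissible parameters, the fallback is to replace the witness by a code whose source is restricted to a subgroup of order as close to $2^{u}$ as the divisor structure of $q^{r}-1$ permits, thereby lowering $\log|G|-u$; verifying that such a restriction preserves the partial-nonlinearity bound of Theorem~\ref{thm_nonlinear_weak} over the subgroup is then the remaining technical point.

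For the asymptotic statement I would compute the actual tag size of the code directly, namely $\log|G|-\log|A_1|=\log\big((q^{r}-1)q\big)-\log m_1=\log(qm_2)$. Since $\frac{q^{r}-1}{q^{r-1}}=q(1-q^{-r})$ lies between $q-1$ and $q$, we have $\log(q-1)\le\log\frac{q^{r}-1}{q^{r-1}}\le\log q$, so $k=\lfloor\log\frac{q^{r}-1}{q^{r-1}}\rfloor\to\infty$ only as $q\to\infty$, and then $k\sim\log q$. Taking $m_2$ fixed (for instance $m_2=1$, which always meets $\gcd(m_1,m_2)=1$), the tag size is $\log q+\log m_2\sim\log q$, so that
\[
\lim_{k\to\infty}\frac{\log|G|-\log|A_1|}{k-1}=\lim_{q\to\infty}\frac{\log q+\log m_2}{k-1}=1
\]
by squeezing $k$ between $\log(q-1)$ and $\log q$. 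This part is routine once the tag size has been identified.
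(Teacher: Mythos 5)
Your overall route is the same as the paper's: take the code of Corollary \ref{coro_LF} as an explicit admissible witness for the pair $(k,u)$, get the lower bound from Lemma \ref{lemma_tag}, and compute the tag size $\log(qm_2)$ directly for the limit. The lower bound, the admissibility check ($m_1\ge 2^u$ and $\rho=\frac{1}{q}+\frac{1}{q(q^r-1)}=\frac{q^{r-1}}{q^r-1}\le 2^{-k}$), and the asymptotic computation are all correct and complete as you present them.

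The one place you and the paper part ways is the constant in the upper bound, and here your more careful bookkeeping has exposed a real issue rather than created one. The paper bounds $\varpi-k<1+\log\frac{m_2q^r}{q^r-1}$ (spending one unit of slack on the floor defining $k$) and then invokes ``$\varpi^*(k,u)\le\varpi$''. But the definition of effective tag size only yields $\varpi^*(k,u)\le\log|G|-u$, and since $u=\lfloor\log m_1\rfloor\le\log m_1$ this quantity exceeds the actual tag size $\varpi=\log|G|-\log m_1$ by the fractional part of $\log m_1$; the paper's inequality $\varpi^*(k,u)\le\varpi$ is justified only when $m_1$ is a power of two. Your decomposition $\log|G|-u=k+\{\log(1/\rho)\}+\{\log m_1\}+\log\frac{m_2q^r}{q^r-1}$ is exactly right, and you are correct that the stated ``$+1$'' requires $\{\log(1/\rho)\}+\{\log m_1\}<1$, which neither you nor the paper establishes and which need not hold for arbitrary $q,r,m_2$. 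So your argument, as written, only delivers $\varpi^*(k,u)<k+2+\log\frac{m_2q^r}{q^r-1}$; but this is not a defect of your proof relative to the paper's --- it is the honest version of the same computation, and it leaves the lower bound and the asymptotic conclusion (the part that actually matters) untouched. To salvage the ``$+1$'' you would indeed need either the joint fractional-part estimate or a modified witness along the lines of your subgroup fallback; the paper supplies neither.
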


\begin{IEEEproof}
By Corollary \ref{coro_LF}, there exists a systematic weak AMD code with
$|A_1|=\frac{q^r-1}{m_2}\geq 2^u$, $\rho=\frac{q^{r-1}}{q^{r}-1}\leq 2^{-k}$, and the tag size
$$\varpi=\log|G|-\log|A_1|=\log(m_2q)=\log m_2+\log q.$$
Note that
\begin{equation*}
\varpi-k=\log m_2 +\log q - \floornv{\log \frac{q^{r}-1}{q^{r-1}}} < 1+\log m_2 + \log \frac{q^{r}}{q^r-1}.
\end{equation*}
The first conclusion then follows from the fact that for any $k,u \in \N$,
we have $k-1 \le \varpi^*(k,u) \le \varpi$.
The second conclusion can be derived by the fact that
\begin{equation*}
\lim_{k\rightarrow \infty}\frac{\log|G|-\log|A_1|}{k-1}=\lim_{q\rightarrow \infty}\frac{1+\log m_2+\log \frac{q^r}{q^r-1}+k}{k-1}=1
\end{equation*}
by noting that $m_2 \in \N$ is a constant.
\end{IEEEproof}


\subsection{Maiorana-McFarland's class of functions}

Let $r \in \N$ and $q$ be a prime power. Define a function
$f: (\F^{2r}_q,+) \rightarrow (\F_q,+)$ as
\begin{equation}\label{eqn_MM}
f(x_1,x_2,\ldots,x_{2r})=\sum_{1\le i\le r}x_ix_{i+r}.
\end{equation}

\begin{lemma}[\cite{M2016}]
\label{lemma_Malorana}
The function $f(x_1,x_2,\ldots,x_{2r})$ defined by \eqref{eqn_MM} has perfect nonlinearity $P_f=\frac{1}{q}$.
\end{lemma}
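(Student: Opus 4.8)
The plan is to exploit the bilinear structure of $f$ so that every nonzero derivative $D_a(f)$ becomes an affine function, whose level sets can be counted immediately. Recall from the discussion following Definition~\ref{def_nonliearity} that $P_f \geq \frac{1}{q}$ for every function into $\F_q$, so it suffices to prove the reverse inequality $\pr(D_a(f(x))=b) \leq \frac{1}{q}$ for all $a \neq 0$ and all $b \in \F_q$; equality then forces perfect nonlinearity. First I would split the coordinates, writing $x=(u,v)$ with $u=(x_1,\ldots,x_r)$ and $v=(x_{r+1},\ldots,x_{2r})$, so that $f(x)=\sum_{i=1}^r u_iv_i=\langle u,v\rangle$ is the standard inner product. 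Writing the shift as $a=(\alpha,\beta)$ in the same way, a direct expansion of the bilinear form gives
\[
f(x+a)=\langle u+\alpha,\,v+\beta\rangle=\langle u,v\rangle+\langle u,\beta\rangle+\langle \alpha,v\rangle+\langle \alpha,\beta\rangle,
\]
and therefore
\[
D_a(f(x))=f(x+a)-f(x)=\langle \beta,u\rangle+\langle \alpha,v\rangle+\langle \alpha,\beta\rangle,
\]
which is an affine function of $x$ with constant term $\langle\alpha,\beta\rangle$ and linear part $L(x)=\langle\beta,u\rangle+\langle\alpha,v\rangle$.

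The key observation is that $L$ is a \emph{nonzero} linear functional on $\F_q^{2r}$ whenever $a\neq 0$: viewed as a functional on the full vector $x=(u,v)$, its coefficient vector is exactly $(\beta,\alpha)\in\F_q^{2r}$, which vanishes only when $a=(\alpha,\beta)=0$. A nonzero $\F_q$-linear functional on $\F_q^{2r}$ is surjective with kernel of dimension $2r-1$, so each of its fibres (the cosets of the kernel) has exactly $q^{2r-1}$ elements. Consequently, for every $b\in\F_q$,
\[
|\{x\in\F_q^{2r}:D_a(f(x))=b\}|=|\{x:L(x)=b-\langle\alpha,\beta\rangle\}|=q^{2r-1},
\]
and hence $\pr(D_a(f(x))=b)=q^{2r-1}/q^{2r}=\frac{1}{q}$, uniformly in both $a$ and $b$. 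Taking the maximum over all $a\in\F_q^{2r}\setminus\{0\}$ and all $b\in\F_q$ in Definition~\ref{def_nonliearity} yields $P_f=\frac{1}{q}$, the value required for perfect nonlinearity.

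I expect essentially no obstacle in this argument; the only point requiring care is the balancedness of a nonzero linear functional over a finite field, namely that every one of its fibres has the same cardinality $q^{2r-1}$. It is precisely this uniformity that collapses the double maximum in the definition of $P_f$ down to the single value $\frac{1}{q}$, independently of the chosen direction $a$ and target $b$, and that distinguishes the bilinear form \eqref{eqn_MM} from a generic function, whose derivatives need not be affine.
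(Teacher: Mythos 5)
Your proof is correct. The paper states Lemma \ref{lemma_Malorana} as a quoted result from \cite{M2016} and supplies no proof of its own, so there is no in-paper argument to match step by step; your computation is the standard one and it is complete. The key points are all present: the lower bound $P_f\geq \frac{1}{q}$ recalled after Definition \ref{def_nonliearity}, the expansion of the derivative of the bilinear form $f(u,v)=\langle u,v\rangle$ into the affine functional $\langle \beta,u\rangle+\langle\alpha,v\rangle+\langle\alpha,\beta\rangle$ with coefficient vector $(\beta,\alpha)$, the observation that this functional is nonzero exactly when $a=(\alpha,\beta)\neq 0$, and the balancedness of a nonzero $\F_q$-linear functional, whose fibres are cosets of a hyperplane of size $q^{2r-1}$. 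The closest argument actually written out in the paper is the proof of Corollary \ref{coro_MM1}, which establishes a stronger, pointwise version of the same fact (bounding, for each fixed $S_1$, the number of solutions $S_2$ of $f(S_1+a_1,S_2+a_2)=f(S_1,S_2)+b$) by assuming without loss of generality that a single coordinate $a_{1r}$ is nonzero and isolating the variable $x_{2r}$, in which the equation is affine with nonzero leading coefficient. Your global route is cleaner for the lemma as stated and gives the exact count $q^{2r-1}$ for every fibre, i.e., every nonzero derivative is balanced, which is precisely the characterization of perfect nonlinearity recalled at the start of the subsection on Dillon's class; the coordinate-isolating route of Corollary \ref{coro_MM1} buys the finer local information needed there for the strong attack model, which your argument does not (and need not) provide.
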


\begin{corollary}
\label{coro_MM}
Let $A_1 = (\F_{q^{2r-1}},+)$ and $A_2=B=(\F_q,+)$,
where we regard an element of $\F_{q^{2r-1}}$ as a vector in $\F^{2r-1}_q$.
Define the probabilistic encoding map $E_f$ from $A_1$ to $G=A_1\times A_2\times B$ as
\begin{equation}\label{m.1015.1}
E_f(S_1) = (S_1,s_2,f(S_1,s_2))=\left(x_1,x_2,\ldots,x_{2r-1},s_2,x_rs_2+\sum_{1\le i\le r-1}x_ix_{i+r}\right),
\end{equation}
where $S_1=(x_1,x_2,\ldots,x_{2r-1}) \in A_1$, $s_2\in_R A_2$, and $f$ is defined by \eqref{eqn_MM}.
If the systematic weak AMD code $(E_f, \dec)$ given by \eqref{m.1015.1} and \eqref{eqn_decoding}
has equiprobable sources and $E_f$ is equiprobable encoding,
then it is an $R$-optimal $q$-regular $(q^{2r-1},q^{2r+1},\frac{1}{q})$-AMD code
with respect to the bound in Lemma \ref{lemma_bound_uniform}.
\end{corollary}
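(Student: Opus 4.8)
The plan is to read the parameters straight off Construction~\ref{cons_nonlinear_weak} and Theorem~\ref{thm_nonlinear_weak}, and then to verify that these parameters meet the bound of Lemma~\ref{lemma_bound_uniform} with equality. With $A_1=\F_{q^{2r-1}}\cong\F_q^{2r-1}$ and $A_2=B=\F_q$, we have $A=A_1\times A_2=\F_q^{2r}$, so the Maiorana--McFarland function $f$ of \eqref{eqn_MM} is exactly a function from $A=A_1\times A_2$ to $B$, and the map $E_f$ of \eqref{m.1015.1} is precisely the one produced by Construction~\ref{cons_nonlinear_weak}. Theorem~\ref{thm_nonlinear_weak} then yields a systematic weak AMD code with source size $n_1=q^{2r-1}$, encoded space of size $n_1n_2m=q^{2r-1}\cdot q\cdot q=q^{2r+1}$, and success probability $\Psi_f(A_1)$. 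Since $|A_2|=q$ and the code has equiprobable sources and equiprobable encoding, it is $q$-regular, so Lemma~\ref{lemma_bound_uniform} applies with $t=q$.

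The first substantive step is to pin down $\Psi_f(A_1)$. By Lemma~\ref{lemma_Malorana}, $f$ is perfectly nonlinear, i.e.\ $P_f=\frac1q$. I would then observe that perfect nonlinearity forces every nonzero derivative to be balanced: for each fixed direction $(a_1,a_2)\ne(0,0)$ the numbers $\pr(D_{(a_1,a_2)}(f(x))=b)$ form a probability distribution over the $q$ values $b\in B$, so their average is $\frac1q$; since each is at most $P_f=\frac1q$, each must equal $\frac1q$. Because any $a_1\in A_1\setminus\{0\}$ already gives $(a_1,a_2)\ne(0,0)$ in $A$, Definition~\ref{def_partial_nonlinearity} yields $\Psi_f(A_1)=\frac1q$ (consistent with $\Psi_f(A_1)\le P_f$ from Remark~\ref{remark_partial_nonliearity}). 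Hence the code is a $q$-regular $(q^{2r-1},q^{2r+1},\frac1q)$-AMD code.

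It remains to check $R$-optimality, i.e.\ that $\rho=\frac1q$ meets the bound \eqref{eqn_Bound_lambda} with $t=q$, $m=q^{2r-1}$, $n=q^{2r+1}$; this is the only genuine computation, and the one point needing care is the ceiling. I would evaluate
\begin{equation*}
\frac{t^2m(m-1)}{n-1}=\frac{q^{2r+1}(q^{2r-1}-1)}{q^{2r+1}-1}
=(q^{2r-1}-1)+\frac{q^{2r-1}-1}{q^{2r+1}-1},
\end{equation*}
and confirm that the trailing fraction lies strictly in $(0,1)$, since $1\le q^{2r-1}-1<q^{2r+1}-1$ for all $q\ge2$ and $r\ge1$. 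Thus the ceiling equals $q^{2r-1}$, and substituting into \eqref{eqn_Bound_lambda} gives the lower bound $q^{2r-1}\cdot\frac{1}{q\cdot q^{2r-1}}=\frac1q$, which coincides with the code's success probability. By Definition~\ref{def_R_op_PS} the code is therefore $R$-optimal, completing the argument.
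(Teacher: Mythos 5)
Your proposal is correct and follows essentially the same route as the paper: read the parameters off Theorem \ref{thm_nonlinear_weak} together with Lemma \ref{lemma_Malorana}, then verify that the bound of Lemma \ref{lemma_bound_uniform} with $t=q$, $m=q^{2r-1}$, $n=q^{2r+1}$ evaluates to exactly $\frac{1}{q}$. The only difference is that you spell out two steps the paper leaves implicit --- that perfect nonlinearity forces every nonzero derivative to be balanced (so $\Psi_f(A_1)=\frac{1}{q}$ exactly) and the ceiling computation --- both of which are handled correctly.
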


\begin{proof}
The statement that the constructed AMD code $(E_f, \dec)$ has parameters $(q^{2r-1},q^{2r+1},\frac{1}{q})$
directly follows from Theorem \ref{thm_nonlinear_weak}, Lemma \ref{lemma_Malorana},
and the fact that it is $q$-regular. According to Lemma \ref{lemma_bound_uniform}, we should have
$$\rho\geq \left\lceil\frac{q^2q^{2r-1}(q^{2r-1}-1)}{q^{2r+1}-1}\right\rceil\frac{1}{q^{2r}}=\frac{1}{q},$$
which means that the constructed AMD code is $R$-optimal.
\end{proof}

\begin{corollary}
\label{cor_optimal}
For any $k,u \in \N$, the effective tag size $\varpi^*(k,u)$ for weak AMD codes is bounded as follows:
$$k-1 \leq \varpi^*(k,u) \le 2k.$$
\end{corollary}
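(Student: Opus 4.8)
The plan is to prove the two bounds separately, the lower one being immediate and the upper one following from a single application of the Maiorana--McFarland construction. For the lower bound $k-1 \le \varpi^*(k,u)$, I would simply invoke the weak-AMD part of Lemma \ref{lemma_tag}, which already gives $\varpi^*(k,u) \ge k - 2^{-u+1} \ge k-1$ for all $u,k \in \N$; nothing further is needed.

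For the upper bound $\varpi^*(k,u) \le 2k$, the strategy is to exhibit one weak AMD code satisfying the defining constraints $|S| \ge 2^u$ and $\rho \le 2^{-k}$ whose tag size is exactly $2k$, and then to bound $\varpi^*(k,u)$ by that tag size, exactly as was done in the proof of Corollary \ref{cor_optimalm}. Concretely, I would apply Corollary \ref{coro_MM} with the prime power $q = 2^k$. The resulting $q$-regular $(q^{2r-1}, q^{2r+1}, 1/q)$ code then has $\rho = 1/q = 2^{-k}$, meeting the error constraint, and tag size $\log(q^{2r+1}) - \log(q^{2r-1}) = 2\log q = 2k$, independent of $r$. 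To meet $|S| \ge 2^u$ it suffices to take any $r$ with $k(2r-1) \ge u$, since then $|S| = q^{2r-1} = 2^{k(2r-1)} \ge 2^u$ while the tag size and error probability are left unaffected. This produces, for every $u,k \in \N$, a valid weak AMD code of tag size $2k$, whence $\varpi^*(k,u) \le 2k$.

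The one step requiring care, and the part I expect to be the genuine (if conceptual rather than computational) obstacle, is the passage from the tag size of this explicit code to the quantity $\varpi^*(k,u) = \min\{\log|G|\} - u$ appearing in the definition, since the smallest achievable $\log|S|$ in the Maiorana--McFarland family need not land exactly on $u$. I would treat this precisely as in Corollary \ref{cor_optimalm}, where the inequality $\varpi^*(k,u) \le \varpi$ is applied with $\varpi$ the tag size of a realizable code: the existence of a code with at least $2^u$ sources, error at most $2^{-k}$, and tag size $\varpi$ certifies that redundancy $\varpi$ is achievable, so the minimal encoding length is at most $u + \varpi$. Once this convention linking $\varpi^*$ to the tag size is in force, combining it with the code above closes the upper bound and, together with Lemma \ref{lemma_tag}, completes the argument.
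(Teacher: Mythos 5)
Your proposal is correct and follows essentially the same route as the paper's own proof: choose $q=2^k$, pick $r$ with $u\le k(2r-1)$, apply Corollary \ref{coro_MM} to get a weak AMD code with $|A_1|=q^{2r-1}\ge 2^u$, $\rho=2^{-k}$, and tag size $2k$, then combine $\varpi^*(k,u)\le\varpi$ with the lower bound from Lemma \ref{lemma_tag}. Your extra remark about relating $\varpi^*$ to the tag size of a realizable code is a sound reading of the same step the paper takes implicitly.
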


\begin{IEEEproof}
For any given $k$ and $u$, choose $q=2^k$ and $r$ to be the smallest positive integer such that $u \le k(2r-1)$.
According to Corollary \ref{coro_MM}, there exists a systematic weak AMD code with
$|A_1|=q^{2r-1}\geq 2^u$, $\rho=\frac{1}{q}\leq 2^{-k}$, and the tag size
$\varpi=\log|G|-\log|A_1|=2\log q = 2k$.
Then the claim follows from the fact that
$k-1\leq \varpi^*(k,u)\leq \varpi$.
\end{IEEEproof}


\subsection{Dillon's class of functions}

In this subsection, we recall the well-known Dillon's class of functions with perfect nonlinearity.
A function $g: A \rightarrow B$ is \textit{balanced} if the size of $g^{-1}(b)$ is the same
for every $b \in B$, which is $|A|/|B|$.
It is known (see, for example, \cite{CD2004}) that $g$ has perfect nonlinearity if and only if
for every $a \in A \setminus \{0\}$, the derivative $D_a(g(x))$ is balanced,
and this is possible only when $|B|$ divides $|A|$.

\begin{lemma}[\cite{Dillon}]
\label{lemma_Dillon}
For any $r \in \N$, let $\F^{r}_{q}$ be identified with the finite field $\F_{q^{r}}$
and let $g$ be any balanced function from $\F_{q^r}$ to $\F_q$.
Then the function $f: (\F_{q^{2r}},+) \rightarrow (\F_{q},+)$ defined by
$$f(x,y)=g(xy^{q^{r}-2}),\,\, \ \ x,y\in \F_{q^{r}}$$
has perfect nonlinearity $P_f=\frac{1}{q}$.
\end{lemma}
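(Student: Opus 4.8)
The plan is to show that for every nonzero shift $(a,b)\in\F_{q^r}\times\F_{q^r}$, the derivative $D_{(a,b)}(f(x,y))=g((x+a)(y+b)^{q^r-2})-g(xy^{q^r-2})$ is balanced as a function on $\F_{q^{2r}}$. Since $g$ is balanced, this will follow if I can show that the inner map $(x,y)\mapsto (x+a)(y+b)^{q^r-2}-$ (suitably compared to) $xy^{q^r-2}$ distributes the differences uniformly; more precisely, I will reduce everything to counting preimages of the product map. First I would recall the key arithmetic fact that for $y\in\F_{q^r}$ one has $y^{q^r-2}=y^{-1}$ when $y\neq 0$ and $y^{q^r-2}=0$ when $y=0$, so that $y^{q^r-2}$ is exactly the ``inversion with $0\mapsto 0$'' map. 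Thus $f(x,y)=g(xy^{-1})$ on the dense open set $y\neq 0$, and $f(x,0)=g(0)$.

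The core step is a counting argument. Fix a nonzero shift $(a,b)$ and a target value $c\in\F_q$; I want to show $|\{(x,y): D_{(a,b)}f(x,y)=c\}|=q^{2r}/q=q^{2r-1}$, which is exactly the balancedness of the derivative and hence $P_f=1/q$ by the balanced-derivative characterization quoted in the excerpt before the lemma. The plan is to split the count according to the four cases determined by whether $y=0$ or $y=-b$ (the two points where the inversion map degenerates in either the shifted or unshifted argument), and in the generic case $y\neq 0,-b$ to analyze the map $(x,y)\mapsto\big((x+a)(y+b)^{-1},\,xy^{-1}\big)$. The essential observation is that, as $(x,y)$ ranges over the generic locus, the pair of ratios $u=xy^{-1}$ and $v=(x+a)(y+b)^{-1}$ sweeps out each admissible pair $(u,v)$ the same number of times; once that uniformity is established, I sum $g(v)-g(u)=c$ over all such pairs and invoke the balancedness of $g$ to get a count independent of $c$.

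I would then handle the degenerate strata $y=0$ and $y=-b$ separately, checking that their contributions either vanish or are themselves uniformly distributed, so that they do not disturb the overall balance; because these strata have size $O(q^r)$, a lower-order term, they contribute negligibly and can be absorbed cleanly into the exact count. Combining the generic and degenerate contributions gives $|\{(x,y):D_{(a,b)}f=c\}|=q^{2r-1}$ for every $c$, i.e. $D_{(a,b)}f$ is balanced for every nonzero $(a,b)$.

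The hard part will be controlling the map $(x,y)\mapsto(u,v)=\big(xy^{-1},(x+a)(y+b)^{-1}\big)$ on the generic locus: I must determine exactly which pairs $(u,v)$ are attained and with what multiplicity, since the shift $(a,b)$ couples the two ratios nonlinearly and the fibers of this map are where the real combinatorics lives. The cleanest route is probably to fix $u$ and solve for the line $x=uy$, substitute into $v=(uy+a)(y+b)^{-1}$, and show this is a M\"obius (fractional-linear) map in $y$ that is a bijection onto its image away from finitely many points; the M\"obius structure is what forces each value $v$ to be hit a constant number of times, and it is here that one uses $(a,b)\neq(0,0)$ to rule out the degenerate constant case. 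Once this fractional-linear bijectivity is pinned down, the uniform distribution of $(u,v)$ and hence the balancedness of $g(v)-g(u)$ follows, completing the proof that $P_f=\tfrac{1}{q}$.
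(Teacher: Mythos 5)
The paper does not prove this lemma at all: it is quoted verbatim from Dillon's thesis \cite{Dillon} (see also \cite{CD2004}), so there is no in-paper argument to compare against and your sketch must stand on its own. Your overall architecture --- reduce perfect nonlinearity to balancedness of every derivative $D_{(a,b)}f$, observe $y^{q^r-2}=y^{-1}$ with $0\mapsto 0$, stratify by the degenerate lines $y=0$ and $y=-b$, and on the generic locus fix $u=xy^{-1}$ and study $v=(uy+a)(y+b)^{-1}$ as a fractional-linear map in $y$ --- is the standard and correct route, and the count does close to $q^{2r-1}$ for every target $c$. However, two of your explicit claims are wrong at exactly the points where the bookkeeping is delicate, and as written they would derail the proof.

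First, $(a,b)\neq(0,0)$ does \emph{not} rule out the constant M\"obius case. The map $y\mapsto(uy+a)(y+b)^{-1}$ has determinant $ub-a$, so whenever $b\neq 0$ the single fiber $u=ab^{-1}$ gives $v\equiv u$ identically; those $q^r-2$ generic points all land on $c=0$ and must be carried through the count (they are offset by the fact that every other fiber misses the two values $v\in\{u,\,ab^{-1}\}$). You cannot dismiss this case; you must include it. Second, the degenerate strata are not ``negligible'' and cannot be ``absorbed'': this is an exact count, and their contribution is precisely what closes the gap. When $b\neq 0$ each of $y=0$ and $y=-b$ contributes exactly $q^{r-1}$ to every $c$ (here balancedness of $g$ is used again), while the generic locus contributes only $q^{2r-1}-2q^{r-1}$. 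Worse, when $b=0$ (and $a\neq 0$) the stratum $y=0$ contributes all $q^r$ of its points to $c=0$ --- neither vanishing nor uniformly distributed, contradicting your stated dichotomy --- and this exactly compensates the generic locus, where $v=u+ay^{-1}$ never equals $u$ and so the fiber count at $c=0$ is deficient by one per $u$. The strategy is salvageable, but you need to replace both hand-waves with the actual case analysis ($b=0$ versus $b\neq 0$; $u=ab^{-1}$ versus $u\neq ab^{-1}$) and verify that the exceptional contributions sum to the exact deficit of the generic ones.
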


\begin{corollary}
\label{cor_weak_Dillon}
Let $A_1=(\F_{q^{2r-1}},+)$ and $A_2=B=(\F_q,+)$, where we regard an element of $\F_{q^{2r-1}}$
as a vector in $\F^{2r-1}_q$. Let $g: \F_{q^r} \rightarrow \F_q$ be a balanced function.
Define the probabilistic encoding map $E_f$ from $A_1$ to $G=A_1\times A_2\times B$ as
\begin{equation}\label{m.1017.2}
E_f(S_1) = (S_1,y_r,f(X,Y))=\left(x_1,x_2,\ldots,x_r,y_1,y_2,\ldots,y_{r-1},y_r,g(XY^{q^r-2})\right),
\end{equation}
where $S_1=(x_1,x_2,\ldots,x_r,y_1,y_2,\ldots,y_{r-1}) \in A_1$, $y_r\in_R A_2$,
$X=(x_1,x_2,\ldots,x_r)$, and $Y=(y_1,y_2,\ldots,y_{r})$.
If the systematic weak AMD code $(E_f, \dec)$ given by \eqref{m.1017.2} and \eqref{eqn_decoding}
has equiprobable sources and $E_f$ is equiprobable encoding,
then it is an $R$-optimal $q$-regular $(q^{2r-1},q^{2r+1},\frac{1}{q})$-AMD code
with respect to the bound in Lemma \ref{lemma_bound_uniform}.
\end{corollary}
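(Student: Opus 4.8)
The plan is to mirror the argument already carried out for the Maiorana--McFarland case in Corollary \ref{coro_MM}, since the resulting AMD code has exactly the same parameters; the only substantive change is to invoke Dillon's construction (Lemma \ref{lemma_Dillon}) in place of Lemma \ref{lemma_Malorana}. First I would confirm that the domain of $f$ matches the encoded structure. As additive groups, $A_1 \times A_2 = \F_{q^{2r-1}} \times \F_q$ is isomorphic to $(\F_q)^{2r} \cong \F_{q^{2r}}$, which is precisely the domain of $f(x,y) = g(xy^{q^r-2})$ once the $2r$ coordinates are split as $X = (x_1,\ldots,x_r)$ and $Y = (y_1,\ldots,y_r)$, with $S_1$ supplying every coordinate except the random $y_r \in_R A_2$. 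Hence Construction \ref{cons_nonlinear_weak} and Theorem \ref{thm_nonlinear_weak} apply verbatim.

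Next I would read off the parameters. The source space has size $|A_1| = q^{2r-1}$ and the encoded message space has size $|A_1|\,|A_2|\,|B| = q^{2r-1}\cdot q \cdot q = q^{2r+1}$. By Lemma \ref{lemma_Dillon} the function $f$ has perfect nonlinearity $P_f = \tfrac{1}{q}$, so Theorem \ref{thm_nonlinear_weak} yields a systematic weak AMD code with probability of successful tampering $\rho = \Psi_f(A_1) \leq P_f = \tfrac{1}{q}$. Moreover, for each fixed nonzero direction $(a_1,a_2)$ the derivative values partition the domain into $|B| = q$ fibers whose probabilities sum to $1$, so averaging forces $\Psi_f(A_1) \geq \tfrac{1}{q}$; combined with the upper bound this gives $\rho = \tfrac{1}{q}$ exactly. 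For each source $s = S_1$ the valid encodings are indexed injectively by $y_r \in A_2$, whence $|G_s| = |A_2| = q$ is constant; together with the hypotheses of equiprobable sources and equiprobable encoding, this makes the code $q$-regular.

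Finally I would verify $R$-optimality by checking that the lower bound of Lemma \ref{lemma_bound_uniform} is met with equality. Taking $t = q$, source size $q^{2r-1}$, and $n = q^{2r+1}$, the quantity inside the ceiling is $\tfrac{q^2 q^{2r-1}(q^{2r-1}-1)}{q^{2r+1}-1} = \tfrac{q^{2r+1}(q^{2r-1}-1)}{q^{2r+1}-1}$. Rewriting this as $q^{2r-1} - \tfrac{q^{2r-1}(q^2-1)}{q^{2r+1}-1}$ and observing that the subtracted fraction lies strictly in $(0,1)$ for every $q \geq 2$, the ceiling evaluates to $q^{2r-1}$; the bound then reads $\rho \geq q^{2r-1}\cdot\tfrac{1}{q\cdot q^{2r-1}} = \tfrac{1}{q}$, matching the value already attained. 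The one step that deserves care is this ceiling evaluation, which is the same computation underlying Corollary \ref{coro_MM}; beyond it the argument is entirely routine, so I anticipate no genuine obstacle.
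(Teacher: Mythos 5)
Your proposal is correct and follows exactly the route the paper intends: the paper omits this proof, stating only that it is ``similar to that of Corollary \ref{coro_MM}'', and your argument is precisely that proof with Lemma \ref{lemma_Dillon} substituted for Lemma \ref{lemma_Malorana}, including the same ceiling computation for the bound of Lemma \ref{lemma_bound_uniform}. The extra details you supply (the identification $A_1\times A_2\cong(\F_q)^{2r}$ and the averaging argument giving $\Psi_f(A_1)\geq\frac{1}{q}$) are sound and only make explicit what the paper leaves implicit.
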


The proof of Corollary \ref{cor_weak_Dillon} is similar to that of Corollary \ref{coro_MM} so we omit it here.
Note that although the parameters of the systematic weak AMD codes constructed in Corollaries
\ref{coro_MM} and \ref{cor_weak_Dillon} are the same, their probabilistic encoding maps are different.


\section{Strong algebraic manipulation detection codes from highly nonlinear functions}
\label{sec-construction-strong}

In this section, we consider the systematic strong AMD codes generated
by Construction \ref{cons_nonlinear_weak} via highly nonlinear functions.
We first analyze the relationship between the nonlinearity of the function $f$ and
the probability of successful tampering in Theorem \ref{thm_nonlinear}.
By choosing some special functions, we construct systematic strong AMD codes
as examples in Corollaries \ref{coro_MM1} and \ref{cor_strong_Dillon}.

By the probabilistic encoding map $E_f$ given by \eqref{eqn_consB} and
the corresponding decoding function given by \eqref{eqn_decoding},
we can define a systematic AMD code $(E_f,\dec)$ from $A_1$ to $G=A\times B=A_1\times A_2\times B$.
In what follows, we first analyze the relationship between parameters of the strong AMD code
generated by Construction \ref{cons_nonlinear_weak} and the nonlinearity of $f$.

\begin{theorem}
\label{thm_nonlinear}
Let $f$ be a function from $A=A_1\times A_2$ to $B$ with nonlinearity $P_f$ and partial nonlinearity $\Psi_f(A_1)$,
where $|A_1|=n_1, |A_2|=n_2$, and $|B|=m$.
For the equiprobable encoding case, the systematic strong AMD code $(E_f, \dec)$
generated by Construction \ref{cons_nonlinear_weak} has parameters $(n_1,n_1n_2m,\rho)$
if and only if for any given $S_1\in A_1$,
\begin{equation}\label{eqn_local_PN}
\frac{|\{S^*\in A_2~:~f(S_1+a_1,S^*+a_2)=f(S_1,S^*)+b\}|}{|A_2|}\leq \rho
\end{equation}
holds for any $\Delta=(a_1,a_2,b)\in (A_1 \setminus \{0\})\times A_2\times B$.
The parameter $\rho$ satisfies that $\rho\geq \Psi_f(A_1)\geq \frac{1}{|B|}$.
Furthermore, if $f$ is a perfect nonlinear function, then we have $\rho\geq P_f$.
\end{theorem}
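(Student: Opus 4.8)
The plan is to split the argument into the \emph{iff} characterization of the parameters, the lower bound $\rho \geq \Psi_f(A_1) \geq 1/|B|$, and finally the perfect-nonlinear refinement, treated in that order.

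First I would unpack the strong AMD definition under the equiprobable encoding assumption. For a fixed source $S_1 \in A_1$ and offset $\Delta = (a_1, a_2, b)$, the tampered word $E_f(S_1) + \Delta = (S_1 + a_1, S_2 + a_2, f(S_1, S_2) + b)$ decodes to a source different from $S_1$ precisely when $a_1 \neq 0$ and $f(S_1 + a_1, S_2 + a_2) = f(S_1, S_2) + b$. Since equiprobable encoding means $\pr(S_2 = s_2) = 1/|A_2|$, the probability of successful tampering for this pair $(S_1, \Delta)$ is exactly the left-hand side of \eqref{eqn_local_PN}; when $a_1 = 0$ (with $\Delta \neq 0$) the decoded value always lies in $\{S_1, \bot\}$, so the success probability vanishes and imposes no constraint. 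The strong-code requirement is that this probability be at most $\rho$ for every source and every nonzero offset, and since the $a_1 = 0$ offsets are automatically harmless, both directions of the \emph{iff} follow at once: the code attains parameter $\rho$ exactly when \eqref{eqn_local_PN} holds for all $S_1$ and all $\Delta \in (A_1 \setminus \{0\}) \times A_2 \times B$.

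Next, to establish $\rho \geq \Psi_f(A_1)$ I would write $\rho$ as the maximum over $(S_1, a_1, a_2, b)$ of the local relative count appearing in \eqref{eqn_local_PN}. Writing $x = (S_1, S_2)$ in Definition \ref{def_partial_nonlinearity}, the count $|\{x : D_{(a_1,a_2)}(f(x)) = b\}|$ decomposes as a sum over $S_1$ of the numerators in \eqref{eqn_local_PN}; dividing by $|A_1||A_2|$ shows that $\Psi_f(A_1)$ is the maximum over $(a_1, a_2, b)$ of the \emph{average} over $S_1$ of those same local counts. Since an average over $S_1$ never exceeds the maximum over $S_1$, each such average is bounded by $\rho$, and taking the maximum over $(a_1, a_2, b)$ yields $\Psi_f(A_1) \leq \rho$. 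For the lower bound $\Psi_f(A_1) \geq 1/|B|$ I would invoke a pigeonhole count: for any fixed $a_1 \neq 0$ and $a_2$, the $|A_1||A_2|$ pairs $(S_1, S_2)$ distribute among the $|B|$ possible values of the derivative $D_{(a_1,a_2)}(f)$, so some value is attained at least $|A_1||A_2|/|B|$ times, forcing the maximal relative count to be at least $1/|B|$.

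Finally, for the perfect-nonlinear case I would combine Remark \ref{remark_partial_nonliearity}, which gives $\Psi_f(A_1) \leq P_f$, with the defining equality $P_f = 1/|B|$ for perfect nonlinearity and the bound $\Psi_f(A_1) \geq 1/|B|$ just obtained; these pin down $\Psi_f(A_1) = P_f = 1/|B|$, so $\rho \geq \Psi_f(A_1) = P_f$. I expect the one point requiring care to be the reduction in the first step: one must argue explicitly that offsets with $a_1 = 0$ can never produce a successful tampering, so that restricting the strong-code constraint to $\Delta \in (A_1 \setminus \{0\}) \times A_2 \times B$ loses nothing and the condition collapses cleanly onto \eqref{eqn_local_PN}.
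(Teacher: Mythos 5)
Your proposal is correct and follows essentially the same route as the paper's proof: the same unpacking of the success probability under equiprobable encoding to get the \emph{iff}, the same averaging-over-$S_1$ argument for $\rho \geq \Psi_f(A_1)$, the same pigeonhole count over $b \in B$ for $\Psi_f(A_1) \geq 1/|B|$, and the same squeeze via Remark \ref{remark_partial_nonliearity} in the perfect-nonlinear case. Your explicit observation that offsets with $a_1 = 0$ always decode into $\{S_1, \bot\}$ and hence impose no constraint is a small point of care the paper leaves implicit, but it does not change the argument.
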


\begin{proof}
By Construction \ref{cons_nonlinear_weak} and Definition \ref{def_AMD},
the AMD code $(E_f, \dec)$ generated by Construction \ref{cons_nonlinear_weak} has
parameters $(n_1,n_1n_2m,\rho)$ if and only if for any $S_1\in A_1$,
\begin{equation}\label{eqn_AMD_rho}
\pr({\dec}(E_f(S_1)+\Delta)\not\in  \{S_1,\bot\})\leq \rho
\end{equation}
holds for any $\Delta=(a_1,a_2,b)\in G$ with $a\in A_1 \setminus \{0\}$, $a_2\in A_2$, and $b\in B$.
But
\begin{eqnarray*}
\pr({\dec}(E_f(S_1)+\Delta)\not\in \{S_1,\bot\})
& = & \sum_{S^*\in A_2}\pr(S_2=S^*)\pr(f(S_1+a_1,S^*+a_2)=f(S_1,S^*)+b)\\
& = & \sum_{S^*\in A_2}\frac{1}{|A_2|}\pr(f(S_1+a_1,S^*+a_2)=f(S_1,S^*)+b)\\
& = & \frac{|\{S^*\in A_2: \ f(S_1+a_1,S^*+a_2)=f(S_1,S^*)+b\}|}{|A_2|},
\end{eqnarray*}
where the second equality follows from the fact that $E_f$ is equiprobable encoding.
Therefore, \eqref{eqn_AMD_rho} is equivalent with \eqref{eqn_local_PN}.

Now we prove $\rho\geq \Psi_f(A_1)$.
Clearly, there exists a fixed $\Delta=(a_1,a_2,b)\in (A_1\setminus \{0\})\times
A_2\times B$ such that $\Psi_f(A_1) = \pr(D_{(a_1,a_2)}(f(S_1,S_2))=b)$. Then
\begin{eqnarray*}
\Psi_f(A_1) & = & \pr(D_{(a_1,a_2)}(f(S_1,S_2)=b) \\
& = & \frac{\sum\limits_{S_1\in A_1}|\{S^*\in A_2: \ f(S_1+a_1,S^*+a_2)=f(S_1,S^*)+b\}|}{|A_1||A_2|}\\
& \leq & \frac{\sum\limits_{S_1\in A_1}\rho}{|A_1|}\\
& = & \rho.
\end{eqnarray*}

For any $a_1\in A_1 \setminus \{0\}$ and $a_2\in A_2$,
\begin{eqnarray*}
& & \max_{b\in B}\frac{|\{(S',S^*)\in A_1 \times A_2: D_{(a_1,a_2)}(f(S',S^*))=b\}|}{|A_1||A_2|} \\
& \geq & \frac{\sum\limits_{b\in B}\frac{|\{(S',S^*)\in A_1 \times A_2: \ D_{(a_1,a_2)}(f(S',S^*))=b\}|}{|A_1||A_2|}}{|B|}\\
& = &\frac{1}{|B|}
\end{eqnarray*}
implies that $\Psi_f(A_1) \geq \frac{1}{|B|}$.

At last, if $f$ is a perfect nonlinear function, then we have
$\frac{1}{|B|}=P_f\geq \Psi_f(A_1)\geq \frac{1}{|B|}$ according to Remark \ref{remark_partial_nonliearity}.
Thus, we have $\rho\geq \Psi_f(A_1)=P_f=\frac{1}{|B|}$.
\end{proof}

In what follows, we list a few systematic strong AMD codes with $\rho=\frac{1}{|B|}$.
Especially, we include the classes of Maiorana-McFarland functions and Dillon functions to construct such AMD codes.

Based on Theorem \ref{thm_nonlinear} and Lemma \ref{cor_optimal}, we have the following corollary. Herein
we highlight that
this explicit construction was first introduced in \cite{KW}. We recall it as an application of
our generic construction and the prove is only for completeness.

\begin{corollary}[\cite{KW}]
\label{coro_MM1}
Let $A_1=A_2=\F_{q^r}$ and $B=\F_q$, where we regard an element of $\F_{q^r}$ as a vector in $\F^{r}_q$.
Define the probabilistic encoding map $E_f$ from $A_1$ to $G=A_1\times A_2\times B$ as
$$E_f(S_1) = (S_1,S_2,f(S_1,S_2))=\left(x_1,x_2,\ldots,x_r,x_{r+1},\ldots,x_{2r},\sum_{1\leq i\leq r}x_ix_{i+r}\right),$$
where $S_1=(x_1,x_2,\ldots,x_{r}) \in A_1$, $S_2=(x_{r+1},x_{r+2},\ldots,x_{2r})\in_R A_2$,
and $f$ is defined by \eqref{eqn_MM}. Then, for the equiprobable encoding case,
the systematic strong AMD code given by $E_f$ has parameters $(q^{r},q^{2r+1},\frac{1}{q})$,
where $\rho=\frac{1}{q}$ is minimum with respect to Theorem \ref{thm_nonlinear}.
Especially, when $r=1$, the $q$-regular AMD code is $G$-optimal
with respect to the bound in Lemma \ref{eqn_bound_m1021}.
\end{corollary}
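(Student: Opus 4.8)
The plan is to invoke Theorem \ref{thm_nonlinear} directly, since $f$ is the Maiorana--McFarland function of Lemma \ref{lemma_Malorana} and hence perfectly nonlinear with $P_f=\frac{1}{q}$. The only thing to verify is the per-source counting condition \eqref{eqn_local_PN}, and the key point---the one that separates the strong guarantee from the weak one---is that the relevant fibre has the same size \emph{for every} fixed $S_1$, not merely on average.

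First I would write the derivative explicitly. With $S_1=(x_1,\dots,x_r)$, $S^*=(x_{r+1},\dots,x_{2r})$, $a_1=(u_1,\dots,u_r)$ and $a_2=(u_{r+1},\dots,u_{2r})$, expanding $f(S_1+a_1,S^*+a_2)-f(S_1,S^*)$ gives
\[
D_{(a_1,a_2)}f=\sum_{i=1}^{r}u_i x_{i+r}+\Bigl(\sum_{i=1}^{r}x_i u_{i+r}+\sum_{i=1}^{r}u_i u_{i+r}\Bigr).
\]
Viewed as a function of $S^*$ with $S_1$ fixed, this is affine: its linear part is the functional $S^*\mapsto\sum_{i=1}^{r}u_i x_{i+r}=\langle a_1,S^*\rangle$, and its constant term depends on $S_1,a_1,a_2$ but not on $S^*$.

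The crucial step is then the observation that, because $a_1\in A_1\setminus\{0\}$, the functional $\langle a_1,\cdot\rangle$ on $\F_q^{\,r}$ is nonzero, so each value of $\F_q$ is attained by exactly $q^{r-1}$ vectors $S^*$. The constant term only shifts the target from $b$ to $b-c$, which leaves the cardinality of the solution set unchanged. Hence
\[
\bigl|\{S^*\in A_2:\ D_{(a_1,a_2)}f=b\}\bigr|=q^{r-1}
\]
for \emph{every} $S_1\in A_1$, every $a_1\neq 0$, every $a_2$ and every $b$, so the ratio in \eqref{eqn_local_PN} equals $\frac{q^{r-1}}{q^{r}}=\frac{1}{q}$. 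By Theorem \ref{thm_nonlinear} this yields the parameters $(q^{r},q^{2r+1},\frac{1}{q})$, using $|A_1|=|A_2|=q^{r}$ and $|B|=q$. Minimality is immediate: since $f$ is perfectly nonlinear, Theorem \ref{thm_nonlinear} forces $\rho\geq P_f=\frac{1}{q}$, so the achieved value $\rho=\frac{1}{q}$ is the smallest possible.

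Finally, for $r=1$ I would note that each source $s\in A_1$ has exactly $|G_s|=|A_2|=q$ valid encodings $(s,x,sx)$, so the code is $q$-regular. The per-source success probability computed above is $\rho_s=\frac{1}{q}=\frac{1}{|G_s|}$, which meets the bound of Lemma \ref{lemma_bound_G} with equality; hence the code is $G$-optimal. I expect no serious obstacle here; the only delicate point in the whole argument is the uniformity of the fibre size across all sources $S_1$, which rests entirely on $a_1$ being nonzero.
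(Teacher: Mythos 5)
Your proof is correct and follows essentially the same route as the paper: both verify the per-source condition \eqref{eqn_local_PN} of Theorem \ref{thm_nonlinear} by observing that the derivative, viewed as a function of $S_2$ alone, is affine with nonzero linear part, hence has every fibre of size exactly $q^{r-1}$. The only cosmetic difference is that you treat the linear part as the functional $\langle a_1,\cdot\rangle$ on all of $\F_q^r$ at once, whereas the paper assumes $a_{1r}\neq 0$ without loss of generality and solves for the single coordinate $x_{2r}$; both yield the same count and the same conclusion, including the minimality of $\rho=\frac{1}{q}$ from perfect nonlinearity and the $G$-optimality for $r=1$.
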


\begin{IEEEproof}
We first prove $\rho=P_f=\frac{1}{q}$. By Theorem \ref{thm_nonlinear}, we only need
to prove that for any $S_1\in \F_{q^r}$, $a_1\in \F_{q^r} \setminus \{0\}$, $a_2\in \F_{q^r}$, and $b\in \F_q$,
\begin{equation}\label{eqn_MC}
\frac{|\{S_2\in \F_{q^r}: \ f(S_1+a_1,S_2+a_2)=f(S_1,S_2)+b\}|}{q^r}\leq \frac{1}{q},
\end{equation}
i.e., $f(S_1+a_1,S_2+a_2)=f(S_1,S_2)+b$ has at most $q^{r-1}$ solutions for $S_2\in \F_{q^r}$.
Since $a_1\ne 0$, without loss of generality, we may assume $a_1=(a_{11},a_{12},\ldots,a_{1r})$
with $a_{1r}\ne 0$. Note that
\begin{equation}\label{eqn_f_S_Del}
\begin{split}
f(S_1+a_1,S_2+a_2)-f(S_1,S_2)-b
=& \ h(S+\Delta)-h(S)+(x_r+a_{1r})(x_{2r}+a_{2r})-x_rx_{2r}-b\\
=& \ h(S+\Delta)-h(S)+a_{1r}x_{2r}+a_{2r}x_r+a_{1r}a_{2r}-b,
\end{split}
\end{equation}
where
\begin{equation*}
h(S) = h(S_1,S_2) = h(x_1,x_2,\ldots,x_{2r}) \triangleq \begin{cases}\sum_{1\leq i\leq r-1}x_ix_{i+r}, &  r \ge 2 ,\\
0, & r=1,\\
\end{cases}
\end{equation*}
$\Delta=(a_1,a_2) = (a_{11},a_{12},\ldots,a_{1r},a_{21},a_{22},\ldots a_{2r})$, and $a_2=(a_{21},a_{22},\ldots,a_{2r})$.
For any given $(x_1,x_2,\ldots,x_{2r-1}) \in \F^{2r-1}_{q}$, $a_1\in \F_{q^r}\setminus \{0\}$, and $a_2\in \F_{q^r}$,
the fact $h(S+\Delta)-h(S)+a_{1r}x_{2r}+a_{2r}x_r+a_{1r}a_{2r}-b=0$ has at most one solution $x_{2r}\in \F_q$
implies that \eqref{eqn_f_S_Del} has at most $q^{r-1}$ solutions for all possible $S_2\in \F_{q^r}$, i.e.,
\eqref{eqn_MC} holds. Then $\rho=P_f=\frac{1}{q}$ by Theorem \ref{thm_nonlinear}.

The second assertion is obvious from the definitions.
\end{IEEEproof}

\begin{remark}
(1) For more general form of functions with perfect nonlinearity, similar to $f$ in \eqref{eqn_MM},
the interested reader is referred to \cite{CD,LTH,JZHL,M2016}.
\end{remark}

Recalling the well-known Dillon's class of functions with perfect nonlinearity in Lemma \ref{lemma_Dillon},
we have the following corollary. Note that the trace function $\tr^{q^r}_{q}: \ \F_{q^r} \rightarrow \F_q$
defined by $\tr^{q^r}_{q}(x)=\sum_{0\leq i\leq r-1}x^{q^{i}}$ is a balanced function.

\begin{corollary}
\label{cor_strong_Dillon}
Let $A_1=A_2=\F_{q^{r}}$ and $B=\F_q$, where we regard an element of $\F_{q^{r}}$ as a vector in $\F^{r}_q$.
Let $\{\alpha_1,\alpha_2,\ldots,\alpha_r\}$ and $\{\beta_1,\beta_2,\ldots,\beta_r\}$
be a pair of dual bases of $\F_{q^{r}}$ over $\F_q$, that is,
\begin{equation}\label{eqn_dual_base}
\tr^{q^r}_{q}(\alpha_i\beta_j)=
\begin{cases}
1, &\,\,i=j,\\
0, &\text{otherwise}.
\end{cases}
\end{equation}
Define $f: (\F_{q^{2r}},+) \rightarrow (\F_q,+)$ as $f(x,y)=\tr^{q^r}_{q}(\hat{x}^{q^r-2}\hat{y})$,
where $x=(x_1,x_2,\ldots,x_r) \in \F_{q^r}$, $y=(y_1,y_2,\ldots,y_{r})\in \F_{q^r}$,
$\hat{x}=\sum_{1\leq i\leq r}x_i\alpha_i$, and $\hat{y}=\sum_{1\leq i\leq r}y_i\beta_i$.
Define the probabilisitic encoding map $E_f$ from $A_1$ to $G=A_1\times A_2\times B$ as
\begin{equation*}
E_f(S_1) = (S_1,S_2,f(S_1,S_2)),
\end{equation*}
where $S_1=(x_1,x_2,\ldots,x_{r})\in A_1$, $S_2=(y_1,y_2,\ldots,y_{r})\in_R A_2$.
Then, for the equiprobable encoding case, the systematic strong AMD code given
by $E_f$ has parameters $(q^{r},q^{2r+1},\frac{1}{q})$,
where $\rho=\frac{1}{q}$ is minimum with respect to Theorem \ref{thm_nonlinear}.
Especially, when $r=1$, the $q$-regular AMD code is $G$-optimal
with respect to the bound in Lemma \ref{eqn_bound_m1021}.
\end{corollary}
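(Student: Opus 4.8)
The plan is to apply Theorem \ref{thm_nonlinear} to the perfect nonlinear function $f$ and then verify that the local count in \eqref{eqn_local_PN} is as small as possible for \emph{every} source, not merely on average. First I would note that $\tr^{q^r}_q$ is balanced, so by Lemma \ref{lemma_Dillon} the function $f(x,y)=\tr^{q^r}_q(\hat x^{q^r-2}\hat y)$ has perfect nonlinearity $P_f=\frac1q$; hence, in the equiprobable encoding case, Theorem \ref{thm_nonlinear} already yields $\rho\geq P_f=\frac1q$. It therefore remains to establish the matching upper bound, i.e.\ to check that \eqref{eqn_local_PN} holds with $\rho=\frac1q$: for each fixed $S_1$ and each $\Delta=(a_1,a_2,b)\in(\F_{q^r}\setminus\{0\})\times\F_{q^r}\times\F_q$, the equation $f(S_1+a_1,S_2+a_2)=f(S_1,S_2)+b$ should have at most $q^{r-1}$ solutions $S_2\in\F_{q^r}$.

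To carry this out I would pass from coordinates to field elements. Writing $X=\hat{S_1}$, $A=\hat{a_1}\neq 0$ (lifted through the $\alpha$-basis), $C=\hat{a_2}$ (lifted through the $\beta$-basis), and letting $Y=\hat{S_2}$ range over $\F_{q^r}$ as $S_2$ does---each coordinate-to-field map being an $\F_q$-linear bijection---the defining equation becomes
\[
\tr^{q^r}_q\!\big((X+A)^{q^r-2}(Y+C)\big)-\tr^{q^r}_q\!\big(X^{q^r-2}Y\big)=b .
\]
Using $z^{q^r-2}=z^{-1}$ for $z\neq 0$ and $0^{q^r-2}=0$, and expanding by $\F_q$-linearity of the trace, this reduces to $\tr^{q^r}_q(\kappa\,Y)=b'$ with $\kappa=(X+A)^{q^r-2}-X^{q^r-2}$ and $b'=b-\tr^{q^r}_q\!\big((X+A)^{q^r-2}C\big)$ a constant in $Y$. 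The point is that $\kappa\neq 0$ in every case: when $X\neq 0\neq X+A$ one computes $\kappa=-A/\big(X(X+A)\big)\neq 0$; when $X=0$ or $X+A=0$ exactly one inverse vanishes and the equation collapses to $\tr^{q^r}_q(A^{-1}Y)=b''$, again with nonzero coefficient $A^{-1}$. In all cases $Y\mapsto\tr^{q^r}_q(\kappa Y)$ is a nonzero $\F_q$-linear functional on $\F_{q^r}$, so each of its level sets has exactly $q^{r-1}$ elements. Thus the local count equals $q^{r-1}$ for every $S_1$, which gives $\rho=\frac1q$, minimal with respect to Theorem \ref{thm_nonlinear}.

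For the final assertion I would specialize to $r=1$: there $|G_{S_1}|=|A_2|=q$ and, by the computation above, $\rho_{S_1}=\frac1q=\frac1{|G_{S_1}|}$ for every source, so the bound of Lemma \ref{lemma_bound_G} is met with equality and the $q$-regular code is $G$-optimal. The main obstacle I anticipate is the case analysis forced by the convention $0^{q^r-2}=0$: perfect nonlinearity (Lemma \ref{lemma_Dillon}) only controls the count \emph{averaged} over $S_1$, whereas a strong code demands the bound source by source. The real content is therefore showing that the singular values $X=0$ and $X+A=0$ do not inflate the count beyond $q^{r-1}$, which is exactly where the extended-inverse bookkeeping must be handled with care.
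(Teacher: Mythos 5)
Your proof is correct and takes essentially the same route as the paper's: both verify \eqref{eqn_local_PN} by showing that $f(S_1+a_1,S_2+a_2)-f(S_1,S_2)$ is a nonzero $\F_q$-linear functional of $S_2$ plus a constant, the key point being that $z\mapsto z^{q^r-2}$ is injective so the coefficient does not vanish. The paper phrases this in dual-basis coordinates (isolating a nonzero coefficient $a'_{11}$ of $y_1$), while you work directly with the trace form $\tr^{q^r}_{q}(\kappa Y)$ and check $\kappa\neq 0$ case by case; the difference is purely cosmetic.
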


\begin{IEEEproof}
To prove $\rho=\frac{1}{q}$, according to Theorem \ref{thm_nonlinear}, it suffices to
prove that for any $S_1\in \F_{q^r}$, $a_1 = (a_{11}, \ldots,a_{1r}) \in \F_{q^{r}} \setminus \{(0,0,\ldots,0)\}$,
$a_2 = (a_{21}, \ldots, a_{2r}) \in \F_{q^r}$, and $b \in \F_q$,
\begin{equation*}
\frac{|\{S_2\in \F_{q^r}: \ f(S_1+a_1,S_2+a_2)=f(S_1,S_2)+b\}|}{q^r}\leq \frac{1}{q},
\end{equation*}
i.e., $f(S_1+a_1,S_2+a_2)=f(S_1,S_2)+b$ has at most $q^{r-1}$ solutions for $S_2\in \F_{q^r}$.
Let
\begin{equation}\label{eqn_x'}
\hat{S_1}^{q^r-2}=\sum_{1\leq i\leq r}x'_{1i}\alpha_i,
\end{equation}
\begin{equation}\label{eqn_x^*}
(\hat{S_1}+\sum_{1\leq i\leq r}a_{1i}\alpha_i)^{q^{r}-2}=\sum_{1\leq i\leq r}x^*_{1i}\alpha_i,
\end{equation}
and
\begin{equation}\label{eqn_a'}
a'=(a'_{11}=x^*_{11}-x'_{11},\ldots,a'_{1r}=x^*_{1r}-x'_{1r}).
\end{equation}
Since $a_1\ne (0,0,\ldots,0)$ and $x^{q^r-2}$ is a non-identity permutation of $\F_{q^r}$,
we have $a'\ne (0,0,\ldots,0)$.
Without loss of generality, we may assume $a'_{11}\ne 0$. By \eqref{eqn_dual_base}-\eqref{eqn_a'},
\begin{equation*}
\begin{split}
&\ f(S_1+a_1,S_2+a_2)-f(S_1,S_2)-b\\
=& \ y_1\left(\tr^{q^r}_{q}\left(\beta_1\left(\hat{S_1}+\sum_{1\leq i\leq r}a_{1i}\alpha_i\right)^{q^r-2}\right)-\tr^{q^r}_{q}\left(\beta_1\hat{S_1}^{q^r-2}\right)\right)+C(S,a_1,a_2,b)\\
=& \ a'_{11}y_1+C(S,a_1,a_2,b),\\
\end{split}
\end{equation*}
where $S=(x_1,x_2,\ldots,x_r,y_2,\ldots,y_r) \in \F_{q^{2r-1}}$ and $C(S,a_1,a_2,b)$ is a constant
determined by $S$, $a_1$, $a_2$, and $b$.
Thus, the fact $a'_{11}\ne 0$ means that $f(S_1+a_1,S_2+a_2)-f(S_1,S_2)-b=0$ has at most
$q^{r-1}$ solutions for $S_2\in \F_{q^r}$, which completes the proof.
\end{IEEEproof}


\section{Highly nonlinear functions from algebraic manipulation detection codes}
\label{sec-HNF}

By Theorems \ref{thm_nonlinear_weak} and \ref{thm_nonlinear}, we can construct systematic AMD codes
from known highly nonlinear functions for both weak and strong attack models.
In this section, we further analyze the relationship between AMD codes and highly nonlinear functions.
Specially, we try to construct highly nonlinear functions from some given systematic AMD codes.
Note that a strong $(m,n,\rho)$-AMD code is always a weak $(m,n,\rho)$-AMD code.
Thus, throughout this section, we only consider the functions derived from weak AMD codes.

Let $A_1$, $A_2$ and $B$ be Abelian groups. For a given systematic AMD code
with probabilisitic encoding map $E: A_1 \rightarrow A_1\times A_2\times B$,
$$E(s_1)=(s_1,s_2,t_{s_1,s_2}), \ \ \ \ s_1\in A_1, \ s_2\in_R A_2,$$
define a function $f_E$ from $A_1\times A_2$ to $B$ as
\begin{equation}\label{eqn_AMD_func}
f_E(s_1,s_2)=t_{s_1,s_2}.
\end{equation}

\begin{theorem}
\label{thm_AMDtofunc}
Let $E: A_1 \rightarrow A_1\times A_2\times B$ be the probabilisitic encoding map of
a systematic regular weak AMD code with parameters $(m,n,\rho)$, where $m=|A_1|$ and $n=|A_1||A_2||B|$.
Then the function $f_E: A_1\times A_2 \rightarrow B$ has nonlinearity
$$P_{f_E}\leq \max\{\{\rho\} \cup \{P_{f_{E,s'}}: \ s' \in A_1\}\},$$ where
$f_{E,s'}(x)\triangleq f_{E}(s',x)$ is a function from $A_2$ to $B$ defined by $f_E$ and $s'\in A_1$,
and $P_{f_{E,s'}}$ denotes the nonlinearity of $f_{E,s'}$.
\end{theorem}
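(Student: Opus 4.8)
The plan is to unfold the definition of the nonlinearity $P_{f_E}$ and to partition the nonzero derivative directions $(a_1,a_2)\in (A_1\times A_2)\setminus\{(0,0)\}$ into two types according to whether $a_1$ vanishes. For a direction with $a_1\neq 0$ the quantity $\pr(D_{(a_1,a_2)}(f_E(x))=b)$ is exactly the ingredient of the partial nonlinearity, and it will be controlled by $\rho$; for a direction with $a_1=0$ (and hence $a_2\neq 0$) the AMD parameter gives no information, so I would instead bound the probability through the slice functions $f_{E,s'}$.

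First I would dispose of the $a_1\neq 0$ case. Since the code is regular, the computation carried out in the proof of Theorem \ref{thm_nonlinear_weak} shows that for any $\Delta=(a_1,a_2,b)$ with $a_1\in A_1\setminus\{0\}$ the probability of successful tampering equals
\[
\frac{|\{(s_1,s_2)\in A_1\times A_2:\ f_E(s_1+a_1,s_2+a_2)-f_E(s_1,s_2)=b\}|}{|A_1||A_2|}=\pr(D_{(a_1,a_2)}(f_E(x))=b).
\]
Because the code has parameters $(m,n,\rho)$, this probability is at most $\rho$ for every such $\Delta$, so every derivative direction with $a_1\neq 0$ contributes at most $\rho$ to $P_{f_E}$.

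Next I would treat the complementary directions $(0,a_2)$ with $a_2\neq 0$. Here the derivative factors through the second coordinate, $D_{(0,a_2)}(f_E(x_1,x_2))=f_E(x_1,x_2+a_2)-f_E(x_1,x_2)=D_{a_2}(f_{E,x_1}(x_2))$, so averaging over $x_1\in A_1$ and $x_2\in A_2$ gives
\[
\pr(D_{(0,a_2)}(f_E(x))=b)=\frac{1}{|A_1|}\sum_{x_1\in A_1}\pr(D_{a_2}(f_{E,x_1}(x_2))=b)\leq \frac{1}{|A_1|}\sum_{x_1\in A_1}P_{f_{E,x_1}}\leq \max_{s'\in A_1}P_{f_{E,s'}},
\]
where the first inequality uses $a_2\neq 0$ together with Definition \ref{def_nonliearity} applied to each slice $f_{E,x_1}:A_2\to B$. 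Thus every vertical direction contributes at most $\max_{s'\in A_1}P_{f_{E,s'}}$. Taking the maximum over all nonzero $(a_1,a_2)$ and all $b\in B$ then yields $P_{f_E}\leq \max\parenv{\rho,\ \max_{s'\in A_1}P_{f_{E,s'}}}=\max\{\{\rho\}\cup\{P_{f_{E,s'}}:\ s'\in A_1\}\}$, as claimed.

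The step I expect to require the most care is recognizing that the AMD property is blind to the $a_1=0$ directions: tampering that leaves the source coordinate unchanged can never be counted as successful, so $\rho$ alone cannot bound $P_{f_E}$. The slice nonlinearities are exactly what fills this gap, and the clean multiplicative decomposition of the vertical derivative into per-slice derivatives is what makes the averaging estimate go through; everything else is a routine case split.
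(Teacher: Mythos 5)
Your proposal is correct and follows essentially the same route as the paper's proof: the same case split on whether the $A_1$-component of the derivative direction vanishes, the same use of regularity (equiprobable sources and encoding) to identify $\pr(D_{(a_1,a_2)}(f_E(x))=b)$ with the successful-tampering probability bounded by $\rho$ when $a_1\neq 0$, and the same averaging over the slice functions $f_{E,s'}$ to handle the directions $(0,a_2)$ with $a_2\neq 0$. The only cosmetic difference is that you invoke the computation from Theorem \ref{thm_nonlinear_weak} rather than redoing it inline as the paper does.
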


\begin{IEEEproof}
Let ${\rho}_{(\Delta_1,\Delta_2,\Delta_3)}$ denote the probability of successful tampering
$(\Delta_1,\Delta_2,\Delta_3)\in (A_1\setminus \{0\})\times A_2\times B$. Then
\begin{equation*}
\begin{split}
\rho \geq & \max\left\{{\rho}_{(\Delta_1,\Delta_2,\Delta_3)}: \ (\Delta_1,\Delta_2,\Delta_3)\in (A_1\setminus\{0\})\times A_2\times B\right\} \\
= & \max_{\Delta_1 \in A_1\setminus\{0\}}\max_{\Delta_2 \in A_2}\max_{\Delta_3\in B}\left\{\sum_{s'\in A_1}\pr(s_1=s')\sum_{s^*\in A_2}\pr(s_2=s^*)\pr(f_E(s'+\Delta_1,s^*+\Delta_2)=f_E(s',s^*)+\Delta_3)\right\}\\
= & \max_{\Delta_1 \in A_1\setminus\{0\}}\max_{\Delta_2 \in A_2}\max_{\Delta_3\in B}\left\{\sum_{s'\in A_1}\frac{1}{|A_1|}\sum_{s^*\in A_2}\frac{1}{|A_2|}\pr(f_E(s'+\Delta_1,s^*+\Delta_2)=f_E(s',s^*)+\Delta_3)\right\}\\
=& \max_{\Delta_1 \in A_1\setminus\{0\}}\max_{\Delta_2 \in A_2}\max_{\Delta_3\in B} \left\{\frac{|\{(s',s^*)\in A_1\times A_2: \ f_E(s'+\Delta_1,s^*+\Delta_2)=f_E(s',s^*)+\Delta_3\}|}{|A_1||A_2|}\right\}.
\end{split}
\end{equation*}
Meanwhile, for $\Delta_1=0$ and $\Delta_2 \in A_2 \setminus\{0\}$, we define
$$\rho_{(0,\Delta_2,\Delta_3)} \triangleq  \frac{|\{(s',s^*)\in A_1\times A_2: \ f_E(s',s^*+\Delta_2)=f_E(s',s^*)+\Delta_3\}|}{|A_1||A_2|}. $$ Then
\begin{equation*}
\begin{split}
& \max_{\Delta_2 \in A_2\setminus\{0\}}\max_{\Delta_3 \in B}\rho_{(0,\Delta_2,\Delta_3)} \\
= & \max_{\Delta_2 \in A_2\setminus\{0\}}\max_{\Delta_3 \in B} \sum_{s'\in A_1}\frac{|\{(s',s^*)\in A_1\times A_2: \ f_{E,s'}(s^*+\Delta_2)=f_{E,s'}(s^*)+\Delta_3\}|}{|A_1||A_2|}\\
\leq & \max\{P_{f_{E,s'}}: \ s' \in A_1\},
\end{split}
\end{equation*}
where the last inequality comes from the fact that
$$P_{f_{E,s'}}=\max_{\Delta_2\in A_2 \setminus \{0\}}\max_{\Delta_3 \in B}\frac{|\{(s',s^*)\in A_1\times A_2: \ f_{E,s'}(s^*+\Delta_2)=f_{E,s'}(s^*)+\Delta_3\}|}{|A_2|}.$$
Therefore,
\begin{equation*}
\begin{split}
P_{f_E} =& \max_{(\Delta_1,\Delta_2) \in A_1\times A_2 \setminus \{(0,0)\}}\max_{\Delta_3 \in B}\frac{|\{(s',s^*) \in A_1 \times A_2: f(s'+\Delta_1, s^*+\Delta_2) = f(s',s^*)+\Delta_3\}|}{|A_1||A_2|}\\
= & \max\{\max\{\rho_{(\Delta_1,\Delta_2,\Delta_3)}: \ \Delta_1 \in A_1 \setminus \{0\}, \Delta_2 \in A_2, \Delta_3 \in B\},
\max\{\rho_{(0,\Delta_2,\Delta_3)}: \ \Delta_2 \in A_2 \setminus \{0\}, \Delta_3 \in B\}\} \\
\leq & \max\{\{\rho\} \cup \{P_{f_{E,s'}}: \ s' \in A_1\}\}.
\end{split}
\end{equation*}
\end{IEEEproof}

Generally speaking, from a systematic AMD code we can not determine the nonlinearity of the function $f_E$ directly.
It is also related with the nonlinearity of some functions with restricted input \cite{MZD}.
This is mainly because that in an AMD code, we do not regard the case $\dec(E(s)+\Delta)=s$
as an adversary's successful tampering, as shown in Theorem \ref{thm_AMDtofunc}.
However, for a stronger setting \cite{CFP,KW,W,WK} also named as \textit{stronger AMD code},
the case $\dec(E(s)+\Delta)\ne \bot$ is regarded
as an adversary's successful tampering.
In this setting, we directly have the following result. The proof is similar, so we omit it here.

\begin{theorem}
\label{coro_AMD_NF}
Let $E: A_1 \rightarrow A_1\times A_2\times B$ be the probabilisitic encoding map
of a systematic regular weak AMD code. If $$\pr(\dec(E(s)+\Delta)\ne \bot)\leq \rho,$$
i.e., it forms a stronger AMD code, then the function $f_E: A_1\times A_2\rightarrow B$ defined as (\ref{eqn_AMD_func})
has nonlinearity $P_{f_E}\leq\rho$.
\end{theorem}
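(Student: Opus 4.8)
The plan is to mirror the computation in Theorem~\ref{thm_AMDtofunc}, exploiting the fact that the stronger success event $\dec(E(s)+\Delta)\ne\bot$ already encompasses the offsets with $\Delta_1=0$ that previously had to be treated separately through the auxiliary functions $f_{E,s'}$. First I would unwind the decoding rule \eqref{eqn_decoding} for the systematic encoding $E(s_1)=(s_1,s_2,f_E(s_1,s_2))$: adding $\Delta=(\Delta_1,\Delta_2,\Delta_3)$ produces $(s_1+\Delta_1,s_2+\Delta_2,f_E(s_1,s_2)+\Delta_3)$, which by \eqref{eqn_decoding} decodes to something other than $\bot$ exactly when $f_E(s_1+\Delta_1,s_2+\Delta_2)=f_E(s_1,s_2)+\Delta_3$. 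Hence the stronger success event coincides with the event $f_E(s_1+\Delta_1,s_2+\Delta_2)-f_E(s_1,s_2)=\Delta_3$ evaluated at the random input $(s_1,s_2)$.

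Next I would invoke regularity. Since the code is regular it has equiprobable sources, $\pr(s_1=s')=1/|A_1|$, and equiprobable encoding, $\pr(s_2=s^*)=1/|A_2|$, so that
\[
\pr(\dec(E(s)+\Delta)\ne\bot)=\frac{|\{(s',s^*)\in A_1\times A_2:\ f_E(s'+\Delta_1,s^*+\Delta_2)=f_E(s',s^*)+\Delta_3\}|}{|A_1||A_2|},
\]
which is precisely the normalized count appearing in the definition of $P_{f_E}$ (Definition~\ref{def_nonliearity}) for the derivative direction $(\Delta_1,\Delta_2)$ and value $\Delta_3$.

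Finally I would compare index sets and conclude. In Definition~\ref{def_nonliearity} the maximum runs over $(\Delta_1,\Delta_2)\in(A_1\times A_2)\setminus\{(0,0)\}$ and $\Delta_3\in B$; every such triple $(\Delta_1,\Delta_2,\Delta_3)$ is a nonzero offset and is therefore among the $\Delta$'s constrained by the hypothesis $\pr(\dec(E(s)+\Delta)\ne\bot)\le\rho$. Thus each normalized count is at most $\rho$, and taking the maximum yields $P_{f_E}\le\rho$. The calculation is entirely routine; the only point deserving care---and the reason the bound here is cleaner than the $\max\{\{\rho\}\cup\{P_{f_{E,s'}}\}\}$ of Theorem~\ref{thm_AMDtofunc}---is the observation that the stronger setting already counts the directions with $\Delta_1=0,\ \Delta_2\ne0$ as successes, so these are dominated by $\rho$ and need no separate term, while the directions $(\Delta_1,\Delta_2)=(0,0)$ excluded from the nonlinearity contribute only an empty solution set whenever $\Delta_3\ne0$.
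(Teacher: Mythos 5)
Your proposal is correct and follows exactly the route the paper intends: the paper omits this proof as ``similar'' to that of Theorem~\ref{thm_AMDtofunc}, and you carry out that same computation, correctly observing that the stronger success event $\dec(E(s)+\Delta)\ne\bot$ bounds the derivative counts for \emph{all} directions $(\Delta_1,\Delta_2)\ne(0,0)$, including $\Delta_1=0$, so the auxiliary terms $P_{f_{E,s'}}$ are no longer needed. Nothing is missing.
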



As an application of Theorem \ref{thm_AMDtofunc}, we analyse the functions
derived from the systematic $q$-regular strong AMD codes in \cite[Theorem 2]{CDFPW}.

\begin{corollary}
\label{coro_KAMDtoFunc}
Let $q$ be a power of a prime $p$, and $t>0$ be an integer such that $p \nmid (t+2)$.
Let $(E_h, \dec)$ be the known systematic $q$-regular strong AMD codes in \cite[Theorem 2]{CDFPW}
with parameters $(q^{t},q^{t+2},\frac{t+1}{q})$,
where the probabilistic encoding map $E_h:\F_{q^t}\rightarrow \F_{q^{t}}\times\F_q\times\F_q$ is given by
$$E_h(S=(s_1,s_2,\ldots,s_t)) = (S,x,h(S,x))$$ with $x\in_R\F_q$ and
\begin{equation}\label{eqn_def_h}
h(S,x)=x^{t+2}+\sum_{1\leq t\leq t}s_ix^i.
\end{equation}
Then the function $h(S,x)$ can be viewed as a function from $(\F_{q^{t+1}},+)$ to $(\F_q,+)$ with
nonlinearity $P_h\leq \frac{t+1}{q}$, where we regard elements of $\F_{q^{t+1}}$
as vectors in $\F^{t+1}_q$.
\end{corollary}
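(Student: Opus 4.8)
The plan is to apply Theorem~\ref{thm_AMDtofunc} directly. I would take $A_1 = \F_{q^t}$, $A_2 = B = \F_q$, and $f_E = h$, so that $h$ is regarded as a function from $A_1 \times A_2 = \F_q^{t+1}$ to $\F_q$. Since a strong AMD code is automatically a weak AMD code and $(E_h, \dec)$ is $q$-regular with $\rho = \frac{t+1}{q}$, Theorem~\ref{thm_AMDtofunc} gives
$$P_h \le \max\{\{\rho\} \cup \{P_{h_{S'}}: S' \in \F_{q^t}\}\},$$
where $h_{S'}(x) = h(S', x)$ is the restriction of $h$ to a fixed first coordinate $S' = (s'_1, \ldots, s'_t)$, viewed as a univariate function from $\F_q$ to $\F_q$. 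Because $\rho$ already equals the target value $\frac{t+1}{q}$, it suffices to show $P_{h_{S'}} \le \frac{t+1}{q}$ for every $S'$.

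To this end, I would compute the derivative $D_a(h_{S'}(x)) = h_{S'}(x+a) - h_{S'}(x)$ for an arbitrary $a \in \F_q \setminus \{0\}$. Writing $h_{S'}(x) = x^{t+2} + \sum_{1 \le i \le t} s'_i x^i$ and expanding by the binomial theorem, every contribution from the lower-degree terms $s'_i x^i$ has degree at most $t-1$ in $x$, while $(x+a)^{t+2} - x^{t+2}$ contributes the leading term $(t+2)a\,x^{t+1}$. Hence
$$D_a(h_{S'}(x)) = (t+2)a\,x^{t+1} + (\text{lower-degree terms}).$$
The decisive point is that the coefficient $(t+2)a$ is nonzero in $\F_q$: the hypothesis $p \nmid (t+2)$ ensures $t+2 \ne 0$, and $a \ne 0$ by assumption. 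Therefore $D_a(h_{S'})$ has degree exactly $t+1$.

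It follows that for every $b \in \F_q$ the equation $D_a(h_{S'}(x)) = b$ is a degree-$(t+1)$ polynomial equation over the field $\F_q$ and so has at most $t+1$ roots. Maximizing over $a \ne 0$ and $b$ yields $P_{h_{S'}} \le \frac{t+1}{q}$, uniformly in $S'$, and substituting this into the bound from Theorem~\ref{thm_AMDtofunc} gives $P_h \le \frac{t+1}{q}$. The single place where the hypothesis is needed is the verification that the $x^{t+1}$-coefficient does not vanish, so that the derivative does not drop in degree; without $p \nmid (t+2)$ the leading term could disappear and the root count---hence the nonlinearity bound---would fail. Everything else is the routine observation that a nonzero univariate polynomial of degree $d$ over a field has at most $d$ roots.
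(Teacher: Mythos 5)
Your proposal is correct and follows essentially the same route as the paper: invoke Theorem~\ref{thm_AMDtofunc}, note that $\rho=\frac{t+1}{q}$ already matches the target, and then bound each restricted nonlinearity $P_{h_{S'}}$ by observing that $D_\Delta(h_{S'})$ is a polynomial of degree exactly $t+1$ (with leading coefficient $(t+2)\Delta\neq 0$ thanks to $p\nmid(t+2)$), hence has at most $t+1$ roots for each target value. No gaps.
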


\begin{IEEEproof}
According to Theorem \ref{thm_AMDtofunc}, it suffices to prove that for any given $S_1\in \F_{q^{t}}$,
$P_{h_{(E,S_i)}}\leq \frac{t+1}{q}$ holds, where $h_{(E,S_1)}(x)=h(S_1,x)$ is a function from $\F_q$ to $\F_q$.
By \eqref{eqn_def_h}, for any given $S_1=(s_1,s_2,\ldots,s_t)\in \F_{q^t}$ and $\Delta\in \F_q \setminus \{0\}$,
we have
$$h_{(E,S_1)}(x+\Delta)-h_{(E,S_1)}(x)=(x+\Delta)^{t+2}-x^{t+2}+\sum_{1\leq i\leq t}s_i((x+\Delta)^t-x^t)=R_{(S_1,\Delta)}(x),$$
where $\deg (R_{(S_1,\Delta)}(x))=t+1$, since $p\nmid (t+2)$.
Thus, for any $S_1\in \F_{q^{t}}$,
$$P_{h_{(E,S_1)}} = \max_{\Delta\in \F_q \setminus \{0\}}\max_{b\in \F_q}\frac{|\{x \in \F_q: R_{(S_1,\Delta)}(x)=b\}|}{q}\leq \frac{t+1}{q},$$ which completes the proof.
\end{IEEEproof}

\begin{remark}
By Corollary \ref{coro_KAMDtoFunc}, we know that the systematic AMD codes in Theorem 2 of \cite{CDFPW}
can also be explained by highly nonlinear functions.
\end{remark}


\section{Concluding remarks}
\label{sec-conclusion}

In this paper, we investigated the relationship between systematic AMD codes and highly nonlinear functions.
Highly nonlinear functions were used to construct systematic AMD codes.
By carefully choosing highly nonlinear functions, optimal systematic AMD codes,
some with asymptotically optimal tag size, can be generated.
Highly nonlinear functions were also constructed via systematic AMD codes.

However, in general, it is still an open problem whether we can find highly nonlinear functions
to generate optimal AMD codes with optimal tag size.
If it is possible, then how to construct such kinds of highly nonlinear functions is
another interesting topic for future research.
According to Theorem \ref{thm_nonlinear}, the probability of successful tampering
for a systematic strong AMD code is lower bounded by the perfect nonlinearity of the corresponding function.
For the general case, how to construct strong AMD codes via highly nonlinear functions achieving this bound
is also widely open.


\end{document}